\documentclass[11pt,a4paper]{amsart}
\usepackage{amsmath,amssymb,latexsym}
\usepackage{amsthm}
\usepackage{bm}
\usepackage{overpic}
\usepackage{epstopdf}
\usepackage{color}
\pagestyle{plain}
 \textwidth      155 true mm
 \oddsidemargin   1mm
 \evensidemargin  1mm
\theoremstyle{plain}
\newtheorem{theorem}{Theorem}[section]

\newtheorem{lemma}{Lemma}[section]
\theoremstyle{definition}

\theoremstyle{remark}
\newtheorem{remark}{Remark}[section]


\newcommand{\supp}{\mathop{\rm supp}}

\newcommand{\field}[1]{\mathbb{#1}}

\newcommand{\R}{\field{R}}

\newcommand{\N}{\field{N}}
\newcommand{\C}{\field{C}}


\renewcommand{\Im}{\mathop{\rm Im}}

\def\XXint#1#2#3{{\setbox0=\hbox{$#1{#2#3}{\int}$}
\vcenter{\hbox{$#2#3$}}\kern-.5\wd0}}


\title{On point-mass Riesz external fields on the real axis}

\date{\today}

\begin{document}

\centerline{\bf  \Large }
\author[D. Benko]{D. Benko}
\address{Department of Mathematics and Statistics, University of South Alabama, Mobile, AL 36688, USA}
\email{dbenko@southalabama.edu}

\author[P. Dragnev]{P. D. Dragnev $^{\dagger}$}
\address{Department of Mathematical Sciences, Purdue University, Fort Wayne, IN 46805, USA }
\email{dragnevp@pfw.edu}
\thanks{\noindent $^{\dagger}$ The research of this author was supported, in part, by a Simons Foundation grant no. 282207.}

\author[R. Orive]{R. Orive $^{*}$}
\address{Departmento de An\'{a}lisis Matem\'{a}tico, Universidad de La Laguna, 38200, The Canary Islands, Spain }
\email{rorive@ull.es}
\thanks{\noindent $^{*}$ The research of this author was supported, in part, by Ministerio de Ciencia e Innovaci\'{o}n under grant MTM2015-71352-P,
and was conducted while visiting PFW as a Scholar-in-Residence. The author wishes to thank the kind hospitality of P. Dragnev and the PFW}

\vskip 1 cm

\vspace{1cm} \maketitle

\begin{abstract}

The purpose of this work is twofold. First, we aim to extend for $0<s<1$ the results of one of the authors about equilibrium measures in the real axis in external fields created by point-mass charges for the case of logarithmic potentials ($s=0$).

Our second motivation comes from the work of the other two authors on Riesz $s$-equilibrium problems on finitely many intervals on the real line in the presence of external fields. They have shown that when the signed equilibrium measure has concave positive part on every interval, then the $s$-equilibrium support is also a union of finitely many intervals, with one of them at most included in each of the initial intervals. As the positive part of the signed equilibrium for point-mass external fields is not necessarily concave, the investigation of the corresponding $s$-equilibrium support is of comparative interest. Moreover, we provide simple examples of compactly supported equilibrium measures in external fields $Q$ not satisfying the usual requirements about the growth at infinity, that is, $\displaystyle \lim_{|x|\rightarrow \infty}\,Q(x) = \infty\,.$

Our main tools are signed equilibrium measures and iterated balayage algorithm in the context of Riesz $s$-equilibrium problems on the real line. As these techniques are not mainstream work in the field and can be applied in other contexts we highlight their use here.

\end{abstract}

\section{Introduction}
This paper is devoted to the study of Riesz $s$-equilibrium measures in the real axis in the presence of external fields created by fixed charges. We are dealing with Riesz $s$-potentials of the form $$U_s^{\sigma}(z) := \int \frac{d\sigma (x)}{|z-x|^s}\,,$$ for measures $\sigma$ supported in the real axis and $0<s<1$. The logarithmic potential $$U^{\sigma}(z) := U_0^{\sigma}(z) = - \int \log |z-x|\,d\mu (x)\,,$$
is the limit case of the Riesz potentials as $s\rightarrow 0^+$ (see e.g. \cite{Landkof}); hereafter, we usually refer to this case as log--case or, simply, as $s=0$.


More precisely, we are interested in equilibrium problems with external fields of the form
\begin{equation}\label{fixed}
Q(x) := Q(x;s,q,z)  = U^{-\,q\, \delta_{z}}(x)\,= -\,{q}{|x-z|^{-s}}\,,0<s<1\,,
\end{equation}
where, as usual, $\delta_a$ denotes the Dirac Delta measure at the point $a\in \C$, $q>0$, and $z \in \C \setminus \R$, that is, the external field is due to the action of an ``attractor'' (negative charge) located outside the real axis. In the sequel, we assume without loss of generality that $z = b i\,,\,b\neq 0\,.$

The focus of our study is the existence (and uniqueness) of the equilibrium measure $\mu_Q = \mu_{Q,s}$ that solves the \emph{Gauss variational problem} or the minimization of the weighted energy
\begin{equation}\label{minenergy}
I_Q (\nu) = I_{Q,s}(\nu):= I_s(\nu) + \int Q\,d\nu = \int \int \,\frac{1}{|x-y|^s}\,d\nu(x) d\nu(y)\,+\,2\int Q(x)\,d\nu(x)\,.
\end{equation}
It is well known that the minimizer, if it exists, is characterized by the variational inequalities, also known as Frostman's conditions,
\begin{equation}\label{equilibrium}
U_s^{\mu_Q}(x) + Q(x) \begin{cases} \geq F_Q\,,\;\text{q.e.}\;  \R \\ \leq F_Q\,,\;\text{everywhere on}\;  S_{Q} := \supp \mu_Q \, .\end{cases}
\end{equation}
Here ``q.e.'' means that the identity holds outside of a (possible) set of zero $s$-capacity. For the proof that \eqref{equilibrium} uniquely characterizes $\mu_Q$ in case it is compactly supported, see e.g. \cite[Theorem 1.2]{chafai} (there, the authors impose that $Q(x)\rightarrow \infty$ as $|x|\rightarrow \infty$, but it is easy to check that that assumption is not essential to prove this part). The compactness of the equilibrium support $S_Q$ is of particular interest. Since the external fields \eqref{fixed} are clearly continuous in the real axis, the first inequality in \eqref{equilibrium} holds everywhere on $\R$, and therefore equality takes place everywhere on $S_{Q}$. It is also known that since the Riesz kernel is strictly definite positive, the equilibrium measure $\mu_Q$, if it exists, is necessarily unique (see e.g. \cite{Zorii1}).

In the logarithmic case there is a broad array of applications to approximation theory (e.g. asymptotic behavior of orthogonal and Heine--Stieltjes polynomials, Pad\'{e} approximants), Random Matrix models, and to other fields related to physical applications (continuum limits of Toda lattice, Soliton theory,...). These applications have motivated a detailed study of equilibrium problems in the presence of a variety of external fields in the complex plane and, in particular, in the real axis (see e.g. the monograph \cite{Saff:97} and the recent paper \cite{MOR2015}, as well as the references therein). However, the situation is different in the case of Riesz $s$--potentials ($0<s<1$). In this last cases, mainly because the important applications to electrostatics and best-packing problems, the natural conductor to study the equilibrium problems is the sphere $\mathbb{S}^d$ in $\R^{d+1}$; see \cite{BDS2009}, \cite{BDS2014} and \cite{DrSa2007}, and the references therein, among many others (however, in a few papers like \cite{MMRS} and \cite{BD2012}, the case of the real axis has been also considered). Roughly speaking, most of the external fields considered have been potentials of single point masses or others closely related to them, like the axis-supported ones \cite{BDS2009}. Thus, in general, the role of the real axis for the logarithmic case has been played by the unit $d$-dimensional sphere, with the spherical caps being the counterpart of the real intervals. One of the aims of this paper is to extend the analysis carried out in the logarithmic case for classical external fields in the real axis like those due to the action of atomic measures to the case of general Riesz potentials.

The outline of the paper is as follows. Section 2 is devoted to announce and comment the main results of the paper, while in Section 3 some background and preliminary results about the balayage and the signed equilibrium measures, both being important tools in finding the (positive) equilibrium measure, are gathered. Finally, the proofs of the main results are given in Section 4.

\section{Main results}

Throughout this section the main results of the paper will be presented. We start with the existence of the equilibrium measure and the possible compactness of its support.
\begin{theorem}\label{thm:compint}
For the Gauss variational problem on the real axis in the external field \eqref{fixed}, we have the following.
\begin{itemize}
\item[(1)] For $q>1$, there exists a unique equilibrium measure and its support, $S_Q$, is a compact interval of the real axis.

\item[(2)] For $q<1$, the equilibrium measure does not exist.

\end{itemize}
\end{theorem}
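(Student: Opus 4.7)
\emph{Proof plan.} Both parts rely on the Riesz balayage $\nu:=\Bal(\delta_{bi},\R)$, a positive probability measure on $\R$ with $U_s^\nu(x)=|x-bi|^{-s}$ for $x\in\R$ and finite energy $A:=I_s(\nu)<\infty$; that $\nu(\R)=1$ follows by matching the $|x|\to\infty$ asymptotics $U_s^\nu(x)\sim\nu(\R)|x|^{-s}$ with $|x-bi|^{-s}\sim|x|^{-s}$.

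For \emph{Part (2)} I would use Cauchy--Schwarz for the strictly positive definite Riesz kernel: every probability $\mu$ on $\R$ satisfies
\[
I_Q(\mu)=I_s(\mu)-2q\,I_s(\mu,\nu)\ge I_s(\mu)-2q\sqrt{I_s(\mu)\,A}\ge -q^2A,
\]
the last step being the minimization of $t^2-2q\sqrt{A}\,t$ in $t=\sqrt{I_s(\mu)}$. Equality throughout forces $\mu\propto\nu$ and $I_s(\mu)=q^2A$, i.e.\ $\mu=q\nu$, which has mass $q<1$ and so is not a probability; thus $I_Q(\mu)>-q^2A$ strictly for every probability $\mu$. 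On the other hand, taking $\mu_L:=q\,\nu+(1-q)\,\tau_L$ with $\tau_L$ the uniform probability on $[L,2L]$, the bounds $I_s(\nu,\tau_L),\,I_s(\tau_L)=O(L^{-s})$ give $I_Q(\mu_L)\to -q^2A$ as $L\to\infty$. Hence $\inf I_Q=-q^2A$ is not attained, so no equilibrium measure exists.

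For \emph{Part (1)} I would approximate by equilibrium problems on the intervals $[-R,R]$. Let $\omega_R$ be the unit Riesz $s$-equilibrium of $[-R,R]$ and $\nu_R:=\Bal(\delta_{bi},[-R,R])$, with mass $m_R\uparrow 1$. From the formulas of Section 3 the signed equilibrium of $[-R,R]$ in the field $Q$ is
\[
\eta_R=(1-qm_R)\,\omega_R+q\,\nu_R,
\]
since $U_s^{\eta_R}+Q=(1-qm_R)F_{\omega_R}$ is constant on $[-R,R]$ and $\eta_R([-R,R])=1$. For $q>1$ and $R$ large, $1-qm_R<0$: the density of $\omega_R$ blows up at $\pm R$, while the density of $\nu_R$ near $\pm R$ is small for $R$ large (the pole $bi$ is fixed far from the endpoints), so $\eta_R<0$ in neighbourhoods of $\pm R$. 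By the symmetry $x\mapsto -x$ and the unimodal (single-well) shape of $Q(x)=-q(x^2+b^2)^{-s/2}$, I expect $\{\eta_R>0\}$ to be a single symmetric subinterval $[-a_R,a_R]\subsetneq(-R,R)$. The iterated balayage algorithm of Section 3, initialised at $\eta_R^+$ and sweeping the negative part inwards onto $[-a_R,a_R]$, then yields the positive equilibrium $\mu_{Q,R}$ on $[-R,R]$ with support $[-a_R,a_R]$. As $R\to\infty$, the uniform lower bound $|1-qm_R|\to q-1>0$ gives uniform upper bounds on $a_R$, so the $\mu_{Q,R}$ live on a common compact interval; extracting a weak-$\ast$ limit $\mu_Q$ and passing the Frostman identity to the limit yields a compactly supported equilibrium measure on some interval $[-a,a]$. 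Compactness of $\supp\mu_Q$ and decay of $U_s^{\mu_Q},\,Q$ at infinity extend Frostman from each $[-R,R]$ to all of $\R$; uniqueness was noted in the introduction.

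The main obstacle is the connectedness of $\{\eta_R>0\}$: showing that the sum of the negative, endpoint-blowing $(1-qm_R)\omega_R$ and the positive, unimodal $q\,\nu_R$ changes sign exactly once on $(0,R)$. This rules out a fragmented support and is where the single-well shape of $Q$ (convex near $0$, concave at infinity) enters crucially.
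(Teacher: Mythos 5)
Your Part (2) is correct and takes a genuinely different route from the paper. The paper argues through the sign of the Frostman constant: Lemma \ref{lem:Fbehavior} shows that any equilibrium measure must have $F_Q\le 0$ and that an unbounded support forces $q=1$, while for $q<1$ the candidate $\eta_a$ from \eqref{signedfixed} is already positive and yields the constant $(1-qm_a)W_s(a)>0$, a contradiction. You instead work directly with the energy functional in the Hilbert space of finite-energy measures: writing $\int Q\,d\mu=-q\,I_s(\mu,\nu)$ with $\nu=\Bal_s(\delta_{bi},\R)$, Cauchy--Schwarz gives $I_Q(\mu)\ge -q^2 I_s(\nu)$ with equality only for $\mu=q\nu$, which is not a probability measure when $q\ne 1$, and the competitors $q\nu+(1-q)\tau_L$ show the infimum is approached. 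This is clean, self-contained, and arguably more informative (it identifies $\inf I_Q=-q^2 I_s(\nu)$); it does require the finiteness of $I_s(\nu)$ and the strict positive definiteness of the Riesz kernel, both available here, and the bound $\int Q\,d\mu\ge -q\,b^{-s}$ to dispose of infinite-energy competitors.

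Part (1) follows the paper's own route (signed equilibrium measure plus IBA) but stops short at the two points that carry the real weight. First, as you acknowledge, you do not prove that $\{\eta_R>0\}$ is a single interval; the endpoint computation \eqref{endbehsigned} only controls the sign of the density at $\pm R$, not the number of sign changes of $\eta_R'$ on $(0,R)$, and since $Q$ is not convex on all of $\R$ this cannot be waved away (cf.\ Remark \ref{connectedness}). Second, one sweep onto $\supp\eta_R^+=[-a_R,a_R]$ does not produce the equilibrium measure: the signed equilibrium of $[-a_R,a_R]$ may again have a negative part, and the iteration must be run to its fixed point. The paper identifies that fixed point as the unique $\widetilde a$ at which the endpoint limit \eqref{endbehsigned} vanishes, i.e.\ \eqref{criticalma} holds, shows via \eqref{sigmaa1} that this is equivalent to $\|\sigma_{\widetilde a}\|=1$, and then invokes Lemma \ref{lem:sigma1} to certify that $\sigma_{\widetilde a}$ is a positive unit measure with constant weighted potential on $[-\widetilde a,\widetilde a]$ and vanishing density at the endpoints. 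Your weak-$\ast$ limit in $R$ is unnecessary once one notes that the terminal interval is the same $[-\widetilde a,\widetilde a]$ for every starting $R\ge\widetilde a$, which is also how the paper upgrades the Frostman inequality from $[-R,R]$ to all of $\R$. So the skeleton of your Part (1) is right, but the connectedness of the positive part and the identification of the IBA limit both still need to be supplied.
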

\begin{remark}\label{rem:realproved}

It is well known that external fields that satisfy the usual growth condition $\displaystyle \lim_{|x|\rightarrow \infty}\,Q(x) = +\infty\,$ have compactly supported equilibrium measures (see, among others, \cite{abey}, \cite{chafai} and \cite{leble}). As far as we know, these are the first examples in the literature that assure compactness of the equilibrium support without the said growth condition. For the study of the solvability of Gauss variational problems within a more general context, see \cite{Zorii1}--\cite{Zorii2}, among other papers by N. V. Zorii.
\end{remark}

\begin{remark}\label{weakly}
The case $q=1$ will be considered in the next section (see Lemma 3.1). In that case, the equilibrium measure exists and agrees with the balayage of the point mass $\delta_z$ onto the real axis, that is, $\displaystyle \mu_Q = Bal_s(\delta_z,\R)$ (for the precise notation and definition, see Section 3 below). Of course, now the support is unbounded. This represents the borderline case between admissible cases (that is, when a compactly supported equilibrium measure exists, following \cite{Saff:97}) and the non-admissible ones. Utilizing similarity with the logarithmic potential theory, we shall refer to it as a ``weakly admissible'' case (see \cite{BLW}, \cite{Hardy}--\cite{Hardy-Kuijlaars}, \cite{OSW} and \cite{Simeonov} for the logarithmic case).
\end{remark}

Our second result deals with the expression of the density of the equilibrium measure, when it is compactly supported, and the precise determination of the endpoints $\pm \widetilde{a}$.
\begin{theorem}\label{thm:single}
If $q>1$, $S_Q = [-\widetilde{a},\widetilde{a}]$, and its density is given by
\begin{equation}\label{densityeqmeas}
\mu_Q'(x) = \,\frac{q\,b^{1-s}}{\mathcal{B}\left(\frac{1}{2},\frac{1-s}{2}\right)\,\mathcal{B}\left(\frac{1+s}{2},\frac{1-s}{2}\right)}\,
\left(\frac{\mathcal{B}\left(\frac{1+s}{2},\frac{1-s}{2}\right)}{(x^2+b^2)^{1-\frac{s}{2}}} - \,\frac{I_{\widetilde{a}}(\widetilde{a})- I_{\widetilde{a}}(x)}{(\widetilde{a}^2-x^2)^{\frac{1-s}{2}}}\right)\,,|x|<\widetilde{a}\,,
\end{equation}
where
\begin{equation}\label{integralI}
\begin{split}
I_a (x) = I_a (x,a,b,z,s):= & \int_{|x|>a}\,\frac{(a^2-t^2)^{\frac{1-s}{2}}}{(t^2+b^2)^{1-s/2}\,|x-t|}\,dt \\
= & \int_0^{\infty}\,\frac{u^{\frac{1-s}{2}}}{(u+a^2+b^2)^{1-\frac{s}{2}}}\,\frac{du}{u+a^2-x^2}\,.
\end{split}
\end{equation}
and
\begin{equation}\label{integralIa}
I_a (a) = \frac{\mathcal{B}\left(\frac{1}{2},\frac{1-s}{2}\right)}{\sqrt{a^2+b^2}}\,,
\end{equation}
with $\widetilde{a}=\widetilde{a}(s,q,b)\in (0,+\infty)$ being the (unique) solution of the equation $F'_s(a) = 0$, with $F_s(a)$ given by
    \begin{equation*}\label{FS(a)}
    \begin{split}
    & \mathcal{F}_s(a)\,  =\,\frac{\Gamma(1+s)}{2^s\,\Gamma\left(\frac{1+s}{2}\right)}\,a^{-s}\,\times \\ & \left(\Gamma\left(\frac{1-s}{2}\right)\,
 -\,q\,\frac{\sqrt{\pi}}{\Gamma(1+\frac{s}{2})}\left(\frac{a}{\sqrt{a^2+b ^2}}\right)^s\,_2F_1\left(\frac{s}{2},\frac{1+s}{2};1+\frac{s}{2};\frac{a^2}{a^2+b ^2} \right)\right)\,,
    \end{split}
    \end{equation*}
    where $_2 F_1(\alpha,\beta;\gamma;z))$ denotes, as usual, the hypergeometric function (see e.g. \cite[Ch. 15]{Abramowitz}).
\end{theorem}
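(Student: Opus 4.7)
My plan is to derive \eqref{densityeqmeas} through the signed-equilibrium technique: for each $a>0$, construct the unique signed measure $\eta_a$ on $[-a,a]$ of total mass $1$ satisfying $U_s^{\eta_a}+Q\equiv F_a$ on $[-a,a]$, and then identify $\widetilde a$ as the unique parameter for which $\eta_{\widetilde a}$ is actually non-negative. First, by the symmetry $Q(-x)=Q(x)$ and uniqueness of $\mu_Q$ (strict positive-definiteness of the Riesz kernel), $\mu_Q$ is even, so Theorem~\ref{thm:compint}(1) forces $S_Q=[-\widetilde a,\widetilde a]$.

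The guiding ansatz is the superposition
\begin{equation*}
\eta_a=q\,\Bal_s(\delta_z,[-a,a])+c_a\,\omega_{[-a,a]},\qquad c_a=1-q\,\|\Bal_s(\delta_z,[-a,a])\|,
\end{equation*}
where $\omega_{[-a,a]}$ is the unit-mass Riesz $s$-equilibrium measure of $[-a,a]$. Since $U_s^{\Bal_s(\delta_z,[-a,a])}(x)=|x-z|^{-s}$ on $[-a,a]$ and $U_s^{\omega_{[-a,a]}}$ is constant there, $U_s^{\eta_a}+Q$ is automatically constant on $[-a,a]$; the choice of $c_a$ enforces $\|\eta_a\|=1$. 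The density of $\omega_{[-a,a]}$ is the classical $(a^2-x^2)^{(s-1)/2}$ with the standard normalization. To compute the balayage I would decompose it as $\sigma|_{[-a,a]}+\Bal_s(\sigma|_{[-a,a]^c},[-a,a])$, where $\sigma=\Bal_s(\delta_z,\R)$ has the Poisson-type density $\frac{b^{1-s}}{\mathcal{B}((1+s)/2,(1-s)/2)(x^2+b^2)^{1-s/2}}$; the second piece, produced by the segment-balayage kernel of Landkof type, contributes $\frac{I_a(a)-I_a(x)}{(a^2-x^2)^{(1-s)/2}}$ together with an extra multiple of $(a^2-x^2)^{(s-1)/2}$. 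Collecting all $(a^2-x^2)^{(s-1)/2}$ terms (those from the balayage correction and from $c_a\,\omega_{[-a,a]}$) into a single coefficient produces the structure of \eqref{densityeqmeas}. The closed form \eqref{integralIa} for $I_a(a)$ follows from the substitution $u=(a^2+b^2)t$ and the Euler identity $\int_0^\infty t^{\alpha-1}(1+t)^{-\alpha-\beta}\,dt=\mathcal{B}(\alpha,\beta)$ with $\alpha=(1-s)/2,\ \beta=1/2$.

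The parameter $\widetilde a$ is characterized by the vanishing of the combined coefficient of $(a^2-x^2)^{(s-1)/2}$ in $\eta_a'$, precisely the condition preventing a singular negative contribution of $\eta_a$ near the endpoints. Pairing this vanishing with $\omega_{[-a,a]}$ and using the energy identity $I_s(\omega_{[-a,a]})=1/\mathrm{cap}_s([-a,a])$ recasts it as $\mathcal F_s'(a)=0$ for the Riesz analog of the Mhaskar--Saff $F$-functional
\begin{equation*}
\mathcal F_s(a)=\frac{1}{\mathrm{cap}_s([-a,a])}-q\!\int|x-z|^{-s}\,d\omega_{[-a,a]}(x).
\end{equation*}
The Riesz capacity of $[-a,a]$ supplies the $a^{-s}$ prefactor and the first beta-function coefficient, while the integral against $\omega_{[-a,a]}$ reduces via $x=a\sin\theta$ to the hypergeometric form appearing in $\mathcal F_s(a)$. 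Once $\widetilde a$ is fixed and $\eta_{\widetilde a}\ge 0$ on $[-\widetilde a,\widetilde a]$ is verified, Frostman's conditions \eqref{equilibrium} hold with equality on $S_Q$ and $\ge F_Q$ on $\R$, so uniqueness of the equilibrium measure gives $\mu_Q=\eta_{\widetilde a}$.

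The main obstacle is twofold: producing the explicit balayage density of $\delta_z$ onto $[-a,a]$ in closed form involving $I_a$, and translating the singular-cancellation condition into $\mathcal F_s'(a)=0$. Both reduce to managing the hypergeometric integral $\int_{-a}^a(a^2-x^2)^{(s-1)/2}(x^2+b^2)^{-s/2}\,dx$ and its $a$-derivative, which require careful bookkeeping of beta and gamma identities.
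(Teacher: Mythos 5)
Your overall route is the paper's: the signed equilibrium ansatz $\eta_a = q\,\Bal_s(\delta_z,[-a,a]) - (q m_a - 1)\,\omega_{[-a,a]}$ is exactly \eqref{signedfixed}, the balayage is computed by first sweeping $\delta_z$ onto $\R$ and then onto $[-a,a]$ (Lemmas \ref{lem:realaxis}--\ref{lem:balint}), the endpoint is pinned down by the vanishing of the coefficient of $(a^2-x^2)^{(s-1)/2}$ in $\eta_a'$ (cf. \eqref{criticalma}), and the Mhaskar--Saff functional \eqref{MSfunct} supplies the hypergeometric equation for $\widetilde a$. Your Beta-function evaluation of $I_a(a)$ is correct. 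However, two steps carrying real mathematical content are asserted rather than argued, and they are precisely where the paper has to work.

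First, nonnegativity of the candidate measure on $[-\widetilde a,\widetilde a]$ does not follow from the vanishing of the endpoint coefficient: once that singular term is cancelled, what remains is the measure $\sigma_{\widetilde a}$ of \eqref{sigmaa}, whose density is a \emph{difference} of two positive terms, and its positivity on the whole interval requires the explicit bound $\frac{I_a(a)-I_a(x)}{(a^2-x^2)^{(1-s)/2}} \le \mathcal{B}\left(\tfrac{1+s}{2},\tfrac{1-s}{2}\right)(x^2+b^2)^{\frac{s}{2}-1}$ established in Lemma \ref{lem:sigma1}(i); you flag this as ``to be verified'' but give no mechanism. Second, and more seriously, your closing step claims that Frostman's inequality $U_s^{\eta_{\widetilde a}} + Q \ge F_Q$ holds on all of $\R$, whereas the construction only yields equality on $[-\widetilde a,\widetilde a]$; nothing in your plan produces the inequality outside the support. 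The paper obtains it from the iterated balayage algorithm: the IBA started from any $[-a,a]$ with $a\ge\widetilde a$ converges to the same limit measure, so \eqref{FrostIBA} holds on every such $[-a,a]$ and hence on $\R$. Alternatively, since you already invoke Theorem \ref{thm:compint}(1), you could note that $\mu_Q$ \emph{is} the signed equilibrium measure of its own (symmetric, compact) support $[-c,c]$ by uniqueness of signed equilibria, and that $c$ must minimize $a\mapsto\mathcal{F}_s(a)$ by the minimization property of \eqref{MSfunct}; either way the identification $\mu_Q=\eta_{\widetilde a}$ needs one of these arguments, not a bare appeal to uniqueness. Finally, the statement asserts that $\widetilde a$ is the \emph{unique} zero of $\mathcal{F}_s'$; this requires the monotonicity analysis of the factor $g(c,s)$ in \eqref{MSfinal} together with the limits of $\mathcal{F}_s(a)$ as $a\to 0^+$ and $a\to+\infty$, which your plan omits entirely.
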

\begin{remark}\label{rem:comput}
It is possible to compute the critical value $\widetilde{a}$ given by Theorem \ref{thm:single}. Indeed, using the properties of the hypergeometric function (see \cite[Ch. 15]{Abramowitz}), and after some manipulations, we get that $\displaystyle \widetilde{a} = \,\sqrt{\frac{c}{1-c}}\,b \,,$ $c$ being the unique solution of the equation
\begin{equation*}\label{criticalc}
F_s(c)\,-\,(1-c)\,G_s(c) = \,\frac{\Gamma\left(\frac{1-s}{2}\right)\,\Gamma(1+\frac{s}{2})}{q\,\sqrt{\pi}}\,,
\end{equation*}
with $\displaystyle F_s(c) = \,_2F_1\left(\frac{s}{2},\frac{1+s}{2};1+\frac{s}{2};c\right)$ and $\displaystyle G_s(c) = \,_2F_1\left(1+\frac{s}{2},\frac{1+s}{2};1+\frac{s}{2};c\right)$.

Thus, preferably with the aid of some computer algebra software, we can obtain the critical value $\widetilde{a}$ of the endpoint. In this sense, it is easy to check that for $s=0.5$ and $q=5$, then $\displaystyle \frac{\widetilde{a}}{b}\, \approx 1.44227$; while for $s=0.5$ and $q=2$, we get $\displaystyle \frac{\widetilde{a}}{b}\, \approx 4.5233\,.$ The densities of both equilibrium measures are displayed in Fig. \ref{fig:sdensity}. Observe that the densities vanishes at the respective endpoints, as it happens in the logarithmic case (``soft edges''). This fact will be thoroughly analyzed below.
\end{remark}

\begin{figure}[h]
    \begin{center}
    \includegraphics[scale=0.4]{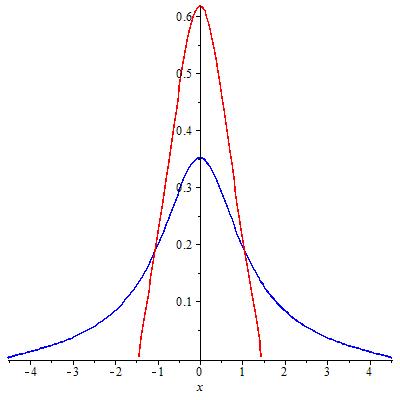}
    \end{center}
    \caption{Graph of the densities of the equilibrium measures $\mu'_Q$ for $s = 1/2, b = 1$, and $q = 5$ (red), $q=2$ (blue), on their respective supports. }
    \label{fig:sdensity}
\end{figure}

\begin{remark}\label{connectedness}
At first glance, the connectedness of the support of $\mu_Q$ might seem trivial, since the external field $Q$ has a single absolute minimum at the origin; but the fact that $Q$ is not convex on the whole real axis requires extra work. 
\end{remark}
\begin{remark}\label{eqlog}
In the particular logarithmic case ($s=0$), the connectedness of the support is a consequence of the algebraic equation satisfied by the Cauchy Transform of the equilibrium measure (see \cite{OrSL2016} and previous \cite{MOR2015}--\cite{OrSL2015}). It can also be deduced from the fact that in this case the external field is weakly convex (see \cite[\S 3]{Benko2003}). Indeed, in this case, we have the following expression for the density of the equilibrium measure in the external field $Q$ (provided $q>1$),
\begin{equation*}\label{logdensity}
\mu'_Q (x) = \,\frac{\sqrt{\widetilde{a}^2-x^2}}{\pi\,(x^2+b^2)}\,,\;x\in (-\widetilde{a},\widetilde{a})\,.
\end{equation*}
where $$\widetilde{a} = \,\frac{\sqrt{2q-1}}{q-1}\,b\,.$$
\end{remark}
\begin{remark}\label{rem:alternative}
In Section 3 below, an alternative expression for the density of the equilibrium measure will be found, which will be used in the proof of Theorem 2.2 (see Section 4 below), namely,
\begin{equation}\label{densaltern}
\begin{split}
\mu_Q'(x)  & =\,q\,Bal'_s(\delta_z,[-\widetilde{a},\widetilde{a}])(x) - (qm_{\widetilde{a}}-1)\,\omega'_{s,[-\widetilde{a},\widetilde{a}]}(x) \\
& =\,q\,\frac{|\Im (z)|^{1-s}}{\mathcal{B}\left(\frac{1}{2},\frac{1-s}{2}\right)}\,\left(\frac{1}{(x^2+b^2)^{1-s/2}} + \,\frac{I_{\widetilde{a}}(x)}{\mathcal{B}\left(\frac{1-s}{2},\frac{1+s}{2}\right)\,(\widetilde{a}^2-x^2)^{\frac{1-s}{2}}}\right) \\
& -\,\frac{qm_{\widetilde{a}}-1}{\widetilde{a}^s\,\mathcal{B}\left(\frac{1}{2},\frac{1+s}{2}\right)\,
(\widetilde{a}^2-x^2)^{\frac{1-s}{2}}}\,,
\end{split}
\end{equation}
where the function $I_a (x)$ was given in \eqref{integralI}, $\omega_{s,[-\widetilde{a},\widetilde{a}]}$ is the equilibrium (Robin) measure of the interval $[-\widetilde{a},\widetilde{a}]$, $Bal_s (\delta_z,[-\widetilde{a},\widetilde{a}])$ denotes the balayage of the point mass $\delta_z$ onto $[-\widetilde{a},\widetilde{a}]$ and $m_{\widetilde{a}}\in (0,1)$ is its mass, i.e. $$m_{\widetilde{a}} = \|Bal_s (\delta_z,[-\widetilde{a},\widetilde{a}])\|\,$$ (it is known that, in general, the balayage of a measure onto a compact set entails a mass loss; see Section 3 below).
Now, comparing \eqref{densityeqmeas} and \eqref{densaltern} leads us to compute the precise mass loss for the balayage of the point mass $\delta_z$ onto the support of $\mu_Q$, the interval $[-\widetilde{a},\widetilde{a}]\,.$
Indeed, this comparison yields the following expression for the mass loss,
\begin{equation}\label{masslosssing}
1 - m_{\widetilde{a}} = 1 -\,\frac{1}{q}\,(1 + f(s)\,h(d,s))\,,\;0<s<1\,,\;q>1\,,\;b >0\,,\;d = \,\frac{\widetilde{a}}{b}\,> 0\,,
\end{equation}
\begin{equation*}\label{gh}
f(s) = \,\frac{\mathcal{B}\left(\frac{1}{2},\frac{1+s}{2}\right)}{\mathcal{B}\left(\frac{1-s}{2},\frac{1+s}{2}\right)}\,,\;h(d,s) = \,\frac{d^s}{\sqrt{1+d^2}}\,.
\end{equation*}
Observe that $$\lim_{s\rightarrow 0^+}\,f(s) = 1\,,\;\lim_{s\rightarrow 1^-}\,f(s) = 0$$
and that $h$ is an increasing function of $d$ for $\displaystyle |d| > \,\frac{s}{1-s}\,,$ while it is decreasing for $\displaystyle |d| < \,\frac{s}{1-s}\,.$

From \eqref{masslosssing}, we immediately obtain that for $s=0.5$ and $q=5$, the mass loss of the balayage $1-m_{\widetilde{a}} \approx 0.431$, while for $q=2$ we get $1-m_{\widetilde{a}} \approx 0.252$. These results seem natural: fixing $s$ and $b$ (the distance of the point mass to the real axis), a bigger mass $q$ provides a smaller support and, thus, a bigger mass loss.
\end{remark}
Our last main result is about the behavior of the density \eqref{densityeqmeas} at the endpoints of the support.
\begin{theorem}\label{thm:endbehavior}
\begin{equation}\label{endbehavior}
\eta_Q'(x) = O\left(|x\mp \widetilde{a}|^{\frac{1+s}{2}}\right)\,,\;x\rightarrow \widetilde{a}^{\pm}.
\end{equation}
\end{theorem}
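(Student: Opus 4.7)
Our plan is to analyze the explicit density \eqref{densityeqmeas} directly as $x\to\widetilde a^{-}$; the case $x\to -\widetilde a^{+}$ follows from the symmetry $Q(-x)=Q(x)$, which forces $\mu_Q$ to be even. Set $\epsilon:=\widetilde a^{2}-x^{2}$ and $c:=\widetilde a^{2}+b^{2}$. Using the second representation in \eqref{integralI} for both $I_{\widetilde a}(\widetilde a)$ and $I_{\widetilde a}(x)$ and combining the two integrals, then substituting $u=\epsilon v$, one finds
\begin{equation*}
\frac{I_{\widetilde a}(\widetilde a)-I_{\widetilde a}(x)}{(\widetilde a^{2}-x^{2})^{(1-s)/2}}=G(\epsilon),\qquad G(\epsilon):=\int_{0}^{\infty}\frac{v^{-(1+s)/2}}{(\epsilon v+c)^{1-s/2}(v+1)}\,dv,
\end{equation*}
so that the parenthesis in \eqref{densityeqmeas} becomes $\mathcal{B}((1+s)/2,(1-s)/2)(x^{2}+b^{2})^{-(1-s/2)}-G(\epsilon)$. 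A standard Beta evaluation gives $G(0)=c^{-(1-s/2)}\mathcal{B}((1-s)/2,(1+s)/2)$, which, by symmetry of $\mathcal{B}$, exactly matches the value at $x=\widetilde a$ of the first summand, so the bracket vanishes at the endpoint (the soft-edge cancellation). The task is thereby reduced to estimating the deviations of each summand from its endpoint value.

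The first summand is real-analytic in $\epsilon$ near $\epsilon=0$, so its deviation is $O(\epsilon)$. The delicate part is $G(\epsilon)-G(0)$: a naive Taylor expansion of $(1+\epsilon v/c)^{-(1-s/2)}$ in $\epsilon$ under the integral sign fails, because the first-order coefficient would be the divergent integral $\int^{\infty}v^{(1-s)/2}/(v+1)\,dv$. The right approach is a rescaling. Starting from
\begin{equation*}
G(\epsilon)-G(0) = c^{-(1-s/2)}\int_{0}^{\infty}\frac{v^{-(1+s)/2}}{v+1}\bigl[(1+\epsilon v/c)^{-(1-s/2)}-1\bigr]\,dv
\end{equation*}
and substituting $w=\epsilon v/c$, one obtains
\begin{equation*}
G(\epsilon)-G(0) = c^{-1/2}\,\epsilon^{(1+s)/2}\int_{0}^{\infty}\frac{w^{-(1+s)/2}}{cw+\epsilon}\bigl[(1+w)^{-(1-s/2)}-1\bigr]\,dw.
\end{equation*}
The bound $1/(cw+\epsilon)\le 1/(cw)$ yields an $\epsilon$-uniform dominating function $c^{-1}w^{-(3+s)/2}\,|(1+w)^{-(1-s/2)}-1|$, which is integrable on $(0,\infty)$ since $(1+s)/2<1$ (the bracket is $O(w)$ at $w=0^{+}$) and $(3+s)/2>1$ (the bracket is bounded as $w\to\infty$). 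Dominated convergence then produces a finite limit as $\epsilon\to 0^{+}$, so $G(\epsilon)-G(0)=O(\epsilon^{(1+s)/2})$.

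Combining, the parenthesis in \eqref{densityeqmeas} is $O(\epsilon)+O(\epsilon^{(1+s)/2})=O(\epsilon^{(1+s)/2})$ because $(1+s)/2<1$. Since $\epsilon=(\widetilde a-x)(\widetilde a+x)\sim 2\widetilde a(\widetilde a-x)$ as $x\to\widetilde a^{-}$, we conclude $\mu_{Q}'(x)=O((\widetilde a-x)^{(1+s)/2})$, and the estimate at $-\widetilde a$ follows by evenness. The main obstacle is extracting the correct fractional exponent $(1+s)/2$ in $G(\epsilon)-G(0)$; the scaling $w=\epsilon v/c$ is the essential technical step, as it separates this fractional contribution from the purely smooth $O(\epsilon)$ behavior of the other summand and recovers the characteristic soft-edge exponent for Riesz $s$-problems.
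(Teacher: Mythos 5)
Your proof is correct and follows essentially the same route as the paper: both start from the explicit density \eqref{densityeqmeas}, rewrite $\bigl(I_{\widetilde a}(\widetilde a)-I_{\widetilde a}(x)\bigr)/(\widetilde a^2-x^2)^{(1-s)/2}$ as the integral \eqref{limit}, observe the soft-edge cancellation at $\epsilon=\widetilde a^2-x^2=0$, and extract the exponent $\frac{1+s}{2}$ by a rescaling substitution that turns the difference into $\epsilon^{(1+s)/2}$ times a convergent integral. Your splitting into a smooth $O(\epsilon)$ part plus the rescaled singular part, with an explicit dominating function justifying the limit, is a slightly tidier packaging of the paper's single change of variables $w=\frac{\widetilde a^2-x^2}{x^2+b^2}(v+1)$, but it is the same argument in substance.
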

\begin{remark}\label{generic}
The result in Theorem \ref{thm:endbehavior} seems to show that the generic behavior at the (soft) endpoints of the Riesz $s$--equilibrium measures in the presence of sufficiently regular external fields will be given by \eqref{endbehavior}. If it is indeed true in a general context, it would extend the well known result for the logarithmic case ($s=0$), about the generic ``square root'' behavior at the endpoints of the equilibrium density, as it was established in \cite{KuML 00}.
\end{remark}

\section{Balayage and signed equilibrium measures}

Throughout this section we deal with the balayage and signed equilibrium measures related to the problem we are addressing.

First, recall that given a closed set $F\subset \C$ and a measure $\sigma$, the measure $$\widehat{\sigma}:=\widehat{\sigma_s}= Bal_s \,(\sigma,F)$$ is said to be the Riesz $s$-balayage of $\sigma$ onto $F$ if $\supp \widehat{\sigma} \subseteq F$ and
\begin{equation}\label{balayage}
U_s^{\widehat{\sigma}}(z) = U_s^{\sigma}(z)\;\;\text{q.e. on}\;\;F\,,\;\;U_s^{\widehat{\sigma}}(z) \leq U_s^{\sigma}(z)\;\;\text{on}\;\;\C\,.
\end{equation}
In the logarithmic case ($s=0$), the balayage $\widehat{\sigma} = \widehat{\sigma}_0$ preserves the total mass: $\|\widehat{\sigma}\| = \|\sigma\|\,$, (and this is no longer true for $0<s<1$); but \eqref{balayage} is modified in the sense that
\begin{equation*}\label{logbalayage}
U^{\widehat{\sigma}}(z) = U^{\sigma}(z) + C\;\;\text{q.e. on}\;\;F\,,\;\;U^{\widehat{\sigma}}(z) \leq U^{\sigma}(z) + C\;\;\text{on}\;\;\C\,,
\end{equation*}
where $C=0$ if $\C \setminus F$ is a compact set.

Our first auxiliary result is the following one about the $s$-balayage of a point mass onto the real axis.
\begin{lemma}\label{lem:realaxis}
Let $z\in \C \setminus \R$ and denote by $\delta_z$ the Dirac Delta at the point $z$. Denote also $\widehat{\delta}_z:= Bal_s \,(\delta_z,\R)\,,\;0\leq s<1$. Then,
\begin{itemize}
\item The density of $\widehat{\delta}_{z}$ is given by
\begin{equation}\label{balreal}
\widehat{\delta}'_z (x) = \frac{|\Im (z)|^{1-s}}{B(\frac{1}{2},\frac{1-s}{2})\,|x-z|^{2-s}}\,,\;x\in \R.
\end{equation}
\item No mass loss takes place in this case, that is,
\begin{equation}\label{noloss}
\|\widehat{\delta}_z\| = \|\delta_z\| = 1\,,\;0\leq s<1.
\end{equation}
\end{itemize}
\end{lemma}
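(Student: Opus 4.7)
By horizontal translation invariance of the Riesz potential and the invariance of $\R$ under $w\mapsto\bar w$, we may assume without loss of generality that $z = ib$ with $b > 0$. Let $\nu$ denote the absolutely continuous measure on $\R$ whose density is the right-hand side of \eqref{balreal}, and write $c_s := 1/B\!\left(\tfrac12,\tfrac{1-s}{2}\right)$. The strategy is to verify that $\nu$ satisfies the two defining conditions \eqref{balayage} for the $s$-balayage, which by uniqueness forces $\widehat{\delta}_z = \nu$ and hence \eqref{balreal}, and separately to compute $\|\nu\| = 1$ to establish \eqref{noloss}. For the mass, the substitution $x = b\tan\theta$ yields
\[
\|\nu\| = c_s\int_{-\pi/2}^{\pi/2}(\cos\theta)^{-s}\,d\theta = c_s \cdot B\!\left(\tfrac12,\tfrac{1-s}{2}\right) = 1,
\]
using the standard identity $2\int_0^{\pi/2}(\cos\theta)^{2\beta-1}\,d\theta = B(\tfrac12,\beta)$ with $\beta = (1-s)/2$.

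\textbf{Equality on $\R$.} The heart of the proof is the reproducing identity
\[
U_s^\nu(y) = c_s\, b^{1-s}\int_{-\infty}^{\infty}\frac{dx}{(x^2+b^2)^{1-s/2}\,|y-x|^s} = (y^2+b^2)^{-s/2} = |y - ib|^{-s}, \qquad y\in\R.
\]
I would evaluate the integral either by representing both factors through one-sided Mellin/Gamma integrals, interchanging the order of integration, and reducing the whole thing to a Beta function identity, or, more conceptually, by recognizing $\nu'(x)$ as the classical Riesz/Poisson kernel of the upper half-plane associated with the Riesz kernel of order $s$, whose reproducing property on the boundary is precisely the identity above and is a standard fact in the potential theory of the fractional Laplacian \cite{Landkof}. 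Since both sides are continuous in $y\in\R$, the identity holds pointwise, hence in particular $s$-quasi everywhere.

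\textbf{Inequality on $\C$.} To complete \eqref{balayage}, we need $U_s^\nu(w) \leq |w - ib|^{-s}$ on all of $\C$. The difference $g(w) := |w-ib|^{-s} - U_s^\nu(w)$ is $s$-superharmonic on $\C\setminus\{ib\}$, tends to $0$ at infinity, and vanishes on $\R$ by the previous step. Applied separately in each of the two open half-planes, the minimum principle for Riesz $s$-superharmonic functions from \cite{Landkof} forces $g\geq 0$ on all of $\C$, which is exactly the required inequality. This identifies $\widehat{\delta}_z$ with $\nu$ and yields \eqref{balreal}; then \eqref{noloss} follows from the mass computation above.

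\textbf{Main obstacle.} The genuinely nontrivial step is the closed-form evaluation of the reproducing integral on $\R$; the mass calculation is a one-line substitution and the extension to $\C$ is a direct application of the standard Riesz minimum principle.
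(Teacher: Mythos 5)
Your plan is workable and the end results are right, but it differs substantially from the paper's route and contains one unsound step.

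On the comparison: the paper never evaluates the reproducing integral directly. It performs an inversion (Kelvin transform) centered at $z$ with radius $\sqrt{2|\Im z|}$, which sends $\R$ to a unit circle $K$ with $z$ as its north pole; under the distance relations of that inversion the candidate density pulls back to the normalized arc-length measure on $K$, whose $s$-potential is the known constant $W_s(S^1)=\mathcal{B}\left(\frac12,\frac{1-s}{2}\right)/(2^s\pi)$. Both the identity $U_s^{\nu}(y)=|y-z|^{-s}$ on $\R$ and the mass $\|\nu\|=1$ then fall out of the same change of variables, with no special-function work, and the $s=0$ case is absorbed by the invariance of harmonic measure under the Kelvin transform. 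Your version instead computes $\|\nu\|$ by the substitution $x=b\tan\theta$ (correct) and proposes to verify the potential identity by a Mellin/Beta manipulation or by quoting the fractional Poisson kernel of the half-plane. That is legitimate, but be aware that this integral identity \emph{is} the content of the lemma: it is exactly the step you defer rather than carry out, whereas the inversion trick is the device that reduces it to the equilibrium property of the circle. Note also that your argument as written does not cover $s=0$, where balayage only preserves the potential up to an additive constant and the reproducing identity must be restated in logarithmic form.

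The genuine flaw is the step ``applied separately in each of the two open half-planes, the minimum principle \dots forces $g\ge 0$.'' For $0<s<1$ the relevant superharmonicity is with respect to a nonlocal kernel ($\alpha=2-s<2$), and the minimum principle on a domain $D$ requires nonnegativity on all of $\C\setminus D$, not merely on $\partial D$; invoking it in the upper half-plane therefore presupposes $g\ge 0$ in the lower half-plane and vice versa, so the argument is circular as stated. The correct tool is the first maximum (domination) principle for Riesz kernels with $\alpha\le 2$ in \cite{Landkof}: since $U_s^{\nu}\le U_s^{\delta_z}$ holds on $\supp\nu=\R$ (indeed with equality), it holds everywhere in $\C$. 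With that substitution your inequality step closes; this is also the implicit reason the paper contents itself with checking equality on $\R$.
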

\begin{proof}

The proof is based on a stereographic projection and its associated Kelvin Transform (see \cite[Sec. IV.5]{Landkof}). Indeed, as in \cite[Lemma 3.4]{BDT2012}, let us take inversion with respect to the circle with center $z$ and radius $\sqrt{2 |\Im (z)|}$, in such a way that the image of the real axis is a circle $K$ with radius $1$ and $z$ as its north pole. Then, if the image of a generic point $x\in \R$ is denoted by $u\in K$, we have that
\begin{equation}\label{invrelat}
|x-z||u-z| = 2 |\Im (z)| \,,\; |du| = \frac{2 |\Im (z)| dx}{|x-z|^2}\,.
\end{equation}
In addition, if $y\in \R$ is another arbitrary point whose image is denoted by $v\in K$, the following relation between the distances holds,
\begin{equation}\label{distrelat}
|v-u| = \frac{2 |\Im (z)| |y-x|}{|x-z||y-z|}\,.
\end{equation}
On the other hand, it is well known that for $0\leq s<2$ the normalized arc-length measure, $d\omega(w) = \,\frac{|dw|}{2\pi}\,,$ is the $s$-equilibrium measure of a circle of unit radius, like $K$ (see e.g. \cite{Landkof}). This means that $\displaystyle V_s^{\omega}(w) = const\,=\,W_s\,,w\in K\,,$ where this constant is given by (see e.g. \cite{BDS2009})
\begin{equation}\label{circle}
W_s = W_s(S^1) = \,\frac{\mathcal{B}\left(\frac{1}{2},\frac{1-s}{2}\right)}{2^s \pi}\,.
\end{equation}

First, consider the case where $0<s<1\,.$ Now, in order to prove \eqref{balreal}, denote $$ d\sigma(x) = \frac{ |\Im (z)| ^{1-s}}{\mathcal{B}\left(\frac{1}{2},\frac{1-s}{2}\right)}\,\frac{dx}{|x-z|^{2-s}}\,,\;x\in \R\,.$$ Then, for $y\in \R$ and denoting as above by $v$ its image by inversion, and making use of \eqref{invrelat}-\eqref{circle}, we have
\begin{equation*}\label{proofbalreal}
\begin{split}
V_s^{\sigma}(y) & = \frac{ |\Im (z)| ^{1-s}}{\mathcal{B}\left(\frac{1}{2},\frac{1-s}{2}\right)}\,\int_{\R}\,\frac{dx}{|y-x|^s|x-z|^{2-s}} = \frac{2^{s-1}}{\mathcal{B}\left(\frac{1}{2},\frac{1-s}{2}\right)}\,\int_K\,\frac{|du|}{|y-z|^s|v-u|^{s}} \\
& = \frac{2^s \pi}{\mathcal{B}\left(\frac{1}{2},\frac{1-s}{2}\right)\,|y-z|^s}\,\int_K\,\frac{|du|/(2\pi)}{|v-u|^{s}} = \frac{2^s \pi\,W_s}{\mathcal{B}\left(\frac{1}{2},\frac{1-s}{2}\right)\,|y-z|^s} = \frac{1}{|y-z|^s} = V_s^{\delta_z}(y)\,,
\end{split}
\end{equation*}
which establishes \eqref{balreal}. In the log--case, where $s=0$, it suffices to take into account the invariance of harmonic measures under Kelvin Transforms (see \cite[Sec. IV.5]{Landkof}) and the fact that the image of $z$ by the above inversion is the point at infinity. Thus, we have that the equilibrium measure of $K$ (which is the balayage of $\delta_{\infty}$ onto $K$), that is, $d\omega(u) = \,\frac{|du|}{2\pi}\,,$ is the Kelvin Transform of the balayage $Bal_0(\delta_z; \R)$. Therefore, the second expression in \eqref{invrelat} yields
$$d\,Bal_0(\delta_z; \R)(x)\,=\,\frac{|du|}{2\pi}\,=\,\frac{2 |\Im (z)| dx}{2\pi\,|x-z|^2}\,=\,\frac{ |\Im (z)| dx}{\pi\,|x-z|^2}\,,$$ which proves \eqref{balreal} for $s=0$.

In a similar fashion, we have for $0<s<1$,
\begin{equation*}\label{proofnoloss}
\begin{split}
\|\widehat{\delta}_{z}\| & = \frac{ |\Im (z)| ^{1-s}}{\mathcal{B}\left(\frac{1}{2},\frac{1-s}{2}\right)}\,\int_{\R}\,\frac{dx}{|x-z|^{2-s}} = \frac{2^{s-1}}{\mathcal{B}\left(\frac{1}{2},\frac{1-s}{2}\right)}\,\int_K\,\frac{|du|}{|v-u|^s} \\
& = \frac{2^s \pi\,W_s}{\mathcal{B}\left(\frac{1}{2},\frac{1-s}{2}\right)} = 1 = \|\delta_z\|\,,
\end{split}
\end{equation*}
which renders the proof of \eqref{noloss} (for $s=0$ it is well known that the balayage preserves the mass).


\end{proof}

\begin{remark}\label{rem:directproof}
The fact that there is no mass loss for the balayage onto the whole real axis may be also proved computing the integral above in a straightforward way by using the Beta function.
\end{remark}

The next lemma is the well-known \emph{superposition principle} (see e.g. \cite[Sec. IV.5, (4.5.5)]{Landkof}).
\begin{lemma}\label{lem:superp}
Let $F$ a closed subset of the complex plane and $\nu$ a measure supported on $\C$. Then,
$$d Bal_s \,(\nu,F) = d(\nu|_F) + \left(\int_{\C \setminus F}\,\frac{d Bal_s \,(\delta_t,F)}{du}\,d\nu(t)\right)du\,.$$
\end{lemma}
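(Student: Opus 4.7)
The plan is to reduce Lemma 3.2 to the uniqueness of the balayage as characterized by the two conditions in (3.1), combined with additivity of balayage and a Fubini-type argument. First I would decompose the given measure as $\nu = \nu_1 + \nu_2$, where $\nu_1 := \nu|_F$ and $\nu_2 := \nu|_{\C\setminus F}$. The restriction to $F$ is already supported on $F$, and since $U_s^{\nu_1}$ trivially coincides with itself q.e.\ on $F$, the uniqueness part of (3.1) forces $Bal_s(\nu_1,F) = \nu_1 = \nu|_F$. This will account for the first term $d(\nu|_F)$ in the formula.

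Next I would establish additivity: if $\sigma = \sigma' + \sigma''$, then $Bal_s(\sigma,F) = Bal_s(\sigma',F) + Bal_s(\sigma'',F)$. This follows from the characterization (3.1), because summing the two identities $U_s^{Bal_s(\sigma',F)} = U_s^{\sigma'}$ and $U_s^{Bal_s(\sigma'',F)} = U_s^{\sigma''}$ q.e.\ on $F$ (and the corresponding inequalities on $\C$) gives exactly the defining conditions for the balayage of $\sigma'+\sigma''$. Hence matters reduce to proving
\begin{equation*}
Bal_s(\nu_2,F) \;=\; \int_{\C\setminus F} Bal_s(\delta_t,F)\,d\nu(t),
\end{equation*}
which is the non-trivial core of the lemma.

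To prove this identity I would define $\mu := \int_{\C\setminus F} Bal_s(\delta_t,F)\,d\nu(t)$ as a measure on $F$ (checking $\sigma$-finiteness and measurability of $t\mapsto Bal_s(\delta_t,F)(E)$ for Borel $E\subset F$ via monotone class arguments), and verify that $\mu$ satisfies the two conditions in (3.1) for $\nu_2$. Applying Fubini--Tonelli gives, for any $w\in\C$,
\begin{equation*}
U_s^{\mu}(w) \;=\; \int_{\C\setminus F}\!\int_F \frac{d\,Bal_s(\delta_t,F)(u)}{|w-u|^s}\,d\nu(t)
\;=\; \int_{\C\setminus F} U_s^{Bal_s(\delta_t,F)}(w)\,d\nu(t).
\end{equation*}
By the defining relations \eqref{balayage} for each $Bal_s(\delta_t,F)$, the integrand equals $|w-t|^{-s}$ for q.e.\ $w\in F$ and is $\leq |w-t|^{-s}$ for every $w\in\C$. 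Integrating against $d\nu(t)$ then yields $U_s^{\mu}(w) = U_s^{\nu_2}(w)$ q.e.\ on $F$ and $U_s^{\mu}\leq U_s^{\nu_2}$ everywhere, so by the uniqueness part of the characterization, $\mu = Bal_s(\nu_2,F)$. Since the balayages $Bal_s(\delta_t,F)$ are absolutely continuous with density $d\,Bal_s(\delta_t,F)/du$ (by Lemma 3.1 for $F=\R$, and in the general situation this is the standing assumption allowing the density notation), the integrand can be pulled through, giving the stated density of $Bal_s(\nu,F)$ on $\C\setminus F$.

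The main obstacle I anticipate is the q.e.\ bookkeeping: the identity $U_s^{Bal_s(\delta_t,F)}(w)=|w-t|^{-s}$ fails on a $t$-dependent exceptional set $E_t\subset F$ of zero $s$-capacity, and one needs the union (or really, the $\nu$-integrated exceptional set) to remain $s$-polar. This is handled by a standard Fubini argument using that sets of zero $s$-capacity have zero inner $s$-capacity measure, but it must be written carefully. Once that subtlety is resolved, the rest reduces to adding the two pieces $Bal_s(\nu_1,F) = \nu|_F$ and the integral formula for $Bal_s(\nu_2,F)$, which together yield exactly the decomposition stated in Lemma 3.2.
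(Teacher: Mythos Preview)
The paper does not actually prove this lemma: it is stated as the well-known \emph{superposition principle} and simply referenced to \cite[Sec.~IV.5, (4.5.5)]{Landkof}. So there is no ``paper's own proof'' to compare against beyond that citation.

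Your sketch is a faithful reconstruction of the standard argument behind the Landkof reference: split $\nu$ into its restriction to $F$ and to $\C\setminus F$, observe that balayage is the identity on measures already supported on $F$, use linearity, and then verify via Fubini that the integrated measure $\mu=\int_{\C\setminus F} Bal_s(\delta_t,F)\,d\nu(t)$ satisfies the defining relations \eqref{balayage}. This is correct in outline and is exactly how Landkof proceeds. Two small remarks. First, your worry about the $t$-dependent exceptional sets is slightly overstated: the set of irregular points of $F$ (where the equality in \eqref{balayage} may fail) is a fixed polar set independent of $t$, so no Fubini gymnastics are needed there. Second, the density notation $d\,Bal_s(\delta_t,F)/du$ in the lemma is really just shorthand for the superposition formula $Bal_s(\nu,F)=\nu|_F+\int_{\C\setminus F}Bal_s(\delta_t,F)\,d\nu(t)$; absolute continuity of $Bal_s(\delta_t,F)$ holds in the cases used later in the paper (intervals on $\R$), but is not needed for the abstract statement, so you need not justify it in general.
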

As noted in the Introduction, we may assume without loss of generality that $z = b i\,,\,b\neq 0\,.$ Thus, we are in the presence of a symmetric (with respect to the imaginary axis) external field, and the support of the equilibrium measure (if it exists) inherits that symmetry. First, we need the expression of the balayage of a point mass placed in the real axis outside a certain interval onto that interval. This is given in \cite{BD2012}.
\begin{lemma}\label{lem:balintreal}
Let $a>0$ and $t\in \R \setminus [-a,a]$. Then, we have:
\begin{equation*}\label{balintreal}
d Bal_s \,(\delta_t,[-a,a]) = \gamma_s\,\left(\frac{a^2-t^2}{a^2-x^2}\right)^{\frac{1-s}{2}}\,\frac{dx}{|x-t|}\,,
\end{equation*}
where $\gamma_s = \mathcal{B}\left(\frac{1+s}{2},\frac{1-s}{2}\right)^{-1}\,.$
\end{lemma}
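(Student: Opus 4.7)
The plan is to show that the measure $\nu$ on $[-a,a]$ whose density is the right-hand side of the displayed formula coincides with $Bal_s(\delta_t,[-a,a])$, by verifying the characterizing conditions directly. Since the Riesz $s$-kernel is strictly positive-definite, the balayage is uniquely determined, and it suffices to check that $\supp\nu\subseteq[-a,a]$ (immediate from the density), that $U_s^\nu(x)=|x-t|^{-s}$ for $x\in(-a,a)$, and that $U_s^\nu(y)\leq|y-t|^{-s}$ on all of $\C$; by the principle of domination, the last condition follows once the first two are in place.

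The main work is the potential identity on $(-a,a)$. After clearing constants, it amounts to
\begin{equation*}
\int_{-a}^{a}\frac{dy}{(a^2-y^2)^{(1-s)/2}\,|x-y|^s\,|y-t|}\;=\;\frac{\mathcal{B}\bigl(\tfrac{1}{2},\tfrac{1+s}{2}\bigr)\,a^s\,W_s([-a,a])}{(t^2-a^2)^{(1-s)/2}\,|x-t|^s},
\end{equation*}
valid for $x\in(-a,a)$ and $|t|>a$, where $W_s([-a,a])$ denotes the Robin constant of the interval. I would prove this via the M\"obius involution
\begin{equation*}
\phi(w)=\frac{tw-a^2}{w-t},
\end{equation*}
which satisfies $\phi(\pm a)=\mp a$ and $\phi(\infty)=t$, hence maps $[-a,a]$ bijectively onto itself (orientation-reversing). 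A brief computation with $y=\phi(w)$ yields the elementary identities $a^2-y^2=(t^2-a^2)(a^2-w^2)/(w-t)^2$, $|y-t|=(t^2-a^2)/|w-t|$, $x-y=(x-t)(w-\phi(x))/(w-t)$, and $|dy|=(t^2-a^2)(w-t)^{-2}\,|dw|$. The powers of $|w-t|$ combine to exponent zero and cancel exactly, transforming the left-hand side into
\begin{equation*}
\frac{1}{(t^2-a^2)^{(1-s)/2}\,|x-t|^s}\int_{-a}^{a}\frac{dw}{(a^2-w^2)^{(1-s)/2}\,|w-\phi(x)|^s}.
\end{equation*}
Since $\phi(x)\in(-a,a)$, the remaining integral is precisely, up to the normalizing factor $\mathcal{B}(\tfrac{1}{2},\tfrac{1+s}{2})\,a^s$, the value at $\phi(x)$ of the Riesz $s$-potential of the equilibrium measure $\omega_{s,[-a,a]}$, which is identically equal to $W_s([-a,a])$ on the interval. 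Inserting the classical value of $W_s([-a,a])$ then collapses the overall multiplicative constant to $\gamma_s=\mathcal{B}(\tfrac{1+s}{2},\tfrac{1-s}{2})^{-1}$, giving $U_s^\nu(x)=|x-t|^{-s}$ for all $x\in(-a,a)$, as required.

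With the potential identity on the support in hand, the inequality $U_s^\nu(y)\leq|y-t|^{-s}$ on $\C$ follows immediately from the principle of domination (Landkof), since $\nu$ has compact support and $\supp\nu=[-a,a]$ already satisfies $U_s^\nu=U_s^{\delta_t}$. This completes the verification that $\nu=Bal_s(\delta_t,[-a,a])$. The main obstacle is the bookkeeping of powers in the M\"obius substitution: the simultaneous cancellation of all $(w-t)$-factors is the nontrivial observation that lets the integral reduce to the known equilibrium potential on $[-a,a]$. A more conceptual alternative would invoke the conformal covariance of the Riesz framework directly, but the explicit change of variable gives the cleanest elementary proof.
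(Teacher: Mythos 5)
Your argument is correct, but note that the paper itself offers no proof of this lemma: it is imported verbatim from \cite{BD2012} (``Balayage Ping-Pong''), so there is no internal proof to compare against. Your verification is a legitimate self-contained derivation, and it is methodologically consistent with how the paper proves its Lemma 3.1 (balayage of $\delta_z$ onto $\R$ via an inversion/Kelvin transform): in both cases a M\"obius map reduces the unknown potential to the known constant potential of an unweighted equilibrium measure. I checked the key computations: the involution $\phi(w)=(tw-a^2)/(w-t)$ does satisfy $\phi(\pm a)=\mp a$, $\phi'(w)=(a^2-t^2)/(w-t)^2$, and the identity $x-\phi(w)=(x-t)\bigl(w-\phi(x)\bigr)/(w-t)$; the exponents of $|w-t|$ ($-2$ from $|dy|$ against $-(1-s)-s-1=-2$ from the three kernel factors) indeed cancel; and the constant works out, since $a^s\,\mathcal{B}\bigl(\tfrac12,\tfrac{1+s}{2}\bigr)W_s([-a,a])=\Gamma\bigl(\tfrac{1+s}{2}\bigr)\Gamma\bigl(\tfrac{1-s}{2}\bigr)=\gamma_s^{-1}$ by the Legendre duplication formula, which gives $U_s^{\nu}(x)=|x-t|^{-s}$ on $(-a,a)$ exactly. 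Two small points you should make explicit: the factor $(a^2-t^2)^{(1-s)/2}$ in the statement must be read as $(t^2-a^2)^{(1-s)/2}$ (or with absolute values) since $|t|>a$; and the appeal to the domination principle requires observing that $\nu$ has finite $s$-energy (its density has only the integrable $(a^2-x^2)^{-(1-s)/2}$ edge singularity) and that the kernel order $\alpha=2-s\leq 2$ in $\R^2$, so Landkof's hypotheses are met. With those remarks added, your proof stands on its own and could replace the external citation.
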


As an immediate consequence of Lemmas \ref{lem:realaxis}-\ref{lem:balintreal}, we have the following
\begin{lemma}\label{lem:balint}
Let $a>0$ and $z=b i\,,\,b\neq 0$. Then,
\begin{equation}\label{balint}
d Bal_s \,(\delta_z,[-a,a]) = \,\frac{b^{1-s}}{\mathcal{B}\left(\frac{1}{2},\frac{1-s}{2}\right)}\,\left(\frac{1}{(x^2+b^2)^{1-s/2}} + \gamma_s\,\frac{I_a (x)}{(a^2-x^2)^{\frac{1-s}{2}}}\right)\,dx\,=\,f(x)dx\,,
\end{equation}
where $I_a (x)$ is given in \eqref{integralI}.
\end{lemma}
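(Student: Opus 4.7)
The plan is to combine Lemmas \ref{lem:realaxis}, \ref{lem:superp}, and \ref{lem:balintreal} via the transitivity of Riesz balayage. Specifically, I would first invoke transitivity: for closed sets $F_2 \subset F_1$ and any finite measure $\sigma$, the $s$-balayage satisfies $Bal_s(\sigma, F_2) = Bal_s(Bal_s(\sigma, F_1), F_2)$. This is immediate from the defining properties in \eqref{balayage}: the potential identities chain on $F_2 \subset F_1$, the pointwise inequalities compose, and the uniqueness of the $s$-balayage closes the argument. Applying this with $F_1 = \R$, $F_2 = [-a,a]$, and $\sigma = \delta_z$, and using Lemma \ref{lem:realaxis} to describe the intermediate measure $\widehat{\delta}_z = Bal_s(\delta_z, \R)$, the problem reduces to computing $Bal_s(\widehat{\delta}_z, [-a,a])$, where $\widehat{\delta}_z$ is the measure on $\R$ with density given by \eqref{balreal}.

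Second, I would apply the superposition principle (Lemma \ref{lem:superp}) to $\widehat{\delta}_z$, since its support lies in $\R$. This yields two contributions. The restriction $\widehat{\delta}_z|_{[-a,a]}$ gives, by \eqref{balreal}, exactly $\frac{b^{1-s}}{\mathcal{B}(\frac{1}{2},\frac{1-s}{2})(x^2+b^2)^{1-s/2}}\,dx$, matching the first term of \eqref{balint}. The sweep of the tails $\widehat{\delta}_z\big|_{|t|>a}$ onto $[-a,a]$ is handled by Lemma \ref{lem:balintreal}, which supplies an explicit density for each point-mass $\delta_t$ with $|t|>a$ (reading the factor $(a^2-t^2)^{(1-s)/2}$ in that formula as $|a^2-t^2|^{(1-s)/2}$, the only consistent interpretation when $|t|>a$).

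Third, I would factor the $x$-dependent quantity $\gamma_s (a^2-x^2)^{-(1-s)/2}$ out of the resulting double integral and identify the remaining single integral with $I_a(x)$ as given in \eqref{integralI}. On each branch $\{t>a\}$ and $\{t<-a\}$ one substitutes $u = t^2 - a^2$; the two resulting integrals combine by the elementary identity
\[
\frac{1}{\sqrt{u+a^2}-x}+\frac{1}{\sqrt{u+a^2}+x}=\frac{2\sqrt{u+a^2}}{u+a^2-x^2},
\]
and this cancels with the Jacobian $\frac{1}{2\sqrt{u+a^2}}$ to yield precisely the one-sided representation of $I_a(x)$ displayed in \eqref{integralI}.

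The argument is essentially bookkeeping once transitivity and superposition are in place, so the only substantive point to watch is the sign convention in Lemma \ref{lem:balintreal} for $|t|>a$ and the collapse of the symmetric $t$-integration over $|t|>a$ to the half-line $u$-integration that defines $I_a(x)$. No further estimates are needed, and the expression \eqref{balint} follows directly.
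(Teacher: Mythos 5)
Your proof is correct and takes essentially the same approach the paper intends: the paper offers no written proof, stating the lemma only as an ``immediate consequence'' of Lemmas \ref{lem:realaxis}--\ref{lem:balintreal}, and your argument --- transitivity of balayage through $\R$, the superposition principle applied to $\widehat{\delta}_z$, and the substitution $u=t^2-a^2$ identifying the swept tails with $I_a(x)$ --- is precisely the computation being elided. Your explicit attention to the implicit transitivity step and to the sign convention in Lemma \ref{lem:balintreal} when $|t|>a$ is a welcome clarification rather than a deviation.
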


\begin{remark}\label{rem:endpoint}
Using \eqref{integralI} it is easy to study the behavior of the density of \eqref{balint} at the endpoints of the interval. Indeed, we have,
\begin{equation}\label{singend}
\begin{split}
\lim_{x\rightarrow \pm a^{\mp}}\,(a^2-x^2)^{\frac{1-s}{2}}\,f(x) & = \,\frac{\gamma_s\,b^{1-s}}{\mathcal{B}\left(\frac{1}{2},\frac{1-s}{2}\right)\,\sqrt{a^2+b^2}}\,\int_0^{\infty}\,\frac{v^{-\frac{1+s}{2}}\,dv}{(v+1)^{1-\frac{s}{2}}} \\
& = \frac{\gamma_s\,b^{1-s}}{\sqrt{a^2+b^2}}\,> \,0\,,
\end{split}
\end{equation}
and hence, the behavior of the density of the balayage measure at the endpoints is of the form $\displaystyle O\left(|x\pm a|^{-\frac{1-s}{2}}\right)\,.$
\end{remark}

Similar to the logarithmic case ($s=0$), for $0<s<1$ if the external field is a convex function on a certain interval $I\subset \R$, then the intersection of $S_Q$ (the support of the equilibrium measure in $Q$) and $I$ is a single interval; but not much more is known, in general, for the support of such Riesz $s$-equilibrium measures.

To study the equilibrium measure of a real interval in the presence of \eqref{fixed}, we will make use of the signed equilibrium measure in such external field.
Namely, for a closed subset $F$ of $\R$ (or $\C$) the $s$-signed equilibrium measure of $F$ associated with the external field $Q$ is a signed measure $\eta_{Q,F}$ supported on $F$, such that $\eta_{Q,F} (F) = 1$ and
\begin{equation}\label{defsigned}
V_s^{\eta_{Q,F}}(x) + Q(x) = C_{Q,F}\,,\;\text{q.e. on}\;F\,.
\end{equation}
If this signed equilibrium measure exists, then it is unique (see \cite[Lemma 2.1]{BDS2009}). The importance of it for our analysis relies on the fact that
\begin{equation}\label{signedsupp}
\supp \mu_{Q,F} \subseteq \supp \eta_{Q,F}^+\,,
\end{equation}
where $\mu_{Q,F}$ is the (positive) $s$-equilibrium measure of $F$ in $Q$ and $\eta_{Q,F}^+$ denotes the positive part of the signed measure in the usual decomposition $\eta_{Q,F} = \eta_{Q,F}^+ - \eta_{Q,F}^-\,$ (see \cite{KD}, where this result is established for $s=0$; the result remains valid for a general $s$, see \cite[Theorem 9]{BDS2014}). Therefore, we know that $\supp \mu_{Q,F}$ cannot intersect intervals where the signed measure $\eta_{Q,F}$ is not positive.

Another useful tool for our analysis is the so--called \emph{Mhaskar--Saff $\mathcal{F}_s$-functional}, which generalizes the Mhaskar--Saff functional (for the logarithmic potential see \cite[Sect. IV, eq. (1.1)]{Saff:97}). Namely, if the support $S_Q$ of the Riesz $s$-equilibrium measure of a conductor $\Sigma \subset \C$ in the external field $Q$ is a compact subset of $\Sigma$, then $S_Q$ minimizes, among all the compact subsets of $\Sigma$, the functional (see e.g. \cite{BDS2014})
\begin{equation}\label{MSfunct}
\mathcal{F}_s(K)\,:=\,W_s(K)\,+\,\int Q\,d \omega_{K}\,,
\end{equation}
where $\omega_K = \omega_{K,s}$ is the (unweighted) $s$-equilibrium measure of $K$ and $W_s (K)$ denotes its (unweighted) energy: that is, such that $U_s^{\omega_K} \equiv W_s(K)\,,$ everywhere in $K$. Observe that this Riesz version of the usual Mhaskar--Saff functional has opposite sign to the classical one. As for the connection with the signed equilibrium measure, take notice that if the signed equilibrium measure on a compact set $K$, $\eta_{Q,K}$, exists, then
\begin{equation*}\label{relsignedMSF}
\mathcal{F}_s(K)\,= C_{Q,K}\,,
\end{equation*}
with $C_{Q,K}$ being the equilibrium constant given in \eqref{defsigned}.

Now, it is easy to check that for the case we are dealing with in the current paper, the expression of the signed equilibrium measure for a real interval $[-a,a]$ in the presence of the external field \eqref{fixed} will be given by
\begin{equation}\label{signedfixed}
\eta_a = \eta_{Q,[-a,a]} = q\, Bal_s (\delta_{z},[-a,a])\,-\,(q\,m_a\,- 1)\,\omega_{[-a,a]}\,,
\end{equation}
where the expression of $Bal_s (\delta_{z},[-a,a])$ was given in \eqref{balint} and $$0< m_a = \|Bal_s (\delta_z,[-a,a])\| \leq 1$$ (recall again that the Riesz $s$-balayage of measures onto compact sets implies in general a mass loss (see \cite{BD2012})), and $\omega_{[-a,a]} := \omega_{s,[-a,a]}$ stands for the $s$-equilibrium measure of the interval $[-a,a]$, which is given by (see \cite{Landkof}, \cite{BD2012} and \cite{MMRS}),
\begin{equation}\label{sequilmeasint}
d\omega (x) = C_a\,\frac{dx}{(a^2-x^2)^{\frac{1-s}{2}}}\,,
\end{equation}
where
\begin{equation}\label{consteq}
C_a = C_{s,a} = \,\left((2a)^s\,\mathcal{B}\left(\frac{1+s}{2},\frac{1+s}{2}\right)\right)^{-1}\,=\,\left(a^s\,\mathcal{B}\left(\frac{1}{2},\frac{1+s}{2}\right)\right)^{-1}.
\end{equation}

Take into account, also, that $m_a$ is an increasing function of $a$, such that $\displaystyle \lim_{a\rightarrow 0^+}\,m_a = 0\,$ and $\displaystyle \lim_{a\rightarrow +\infty}\,m_a = 1\,.$ Now, from \eqref{signedfixed} and \eqref{singend}, we have for the behavior of the density of $\eta_a$ at the endpoints $\pm a$,
\begin{equation}\label{endbehsigned}
\lim_{x\rightarrow \pm a^{\mp}}\,(a^2-x^2)^{\frac{1-s}{2}}\,\eta'_a(x) = \,\frac{b^{1-s}}{\mathcal{B}\left(\frac{1-s}{2},\frac{1+s}{2}\right)\,\sqrt{a^2+b^2}}\,-\,
\frac{q\,m_a - 1}{a^s\,\mathcal{B}\left(\frac{1-s}{2},\frac{1+s}{2}\right)}\,.
\end{equation}
Suppose now that $q>1$. Then, as $a$ grows from $0$ to $+\infty$, the density of the signed equilibrium measure $\eta_a$ given in \eqref{signedfixed} evolves as shown in Figures \ref{fig:eta1}--\ref{fig:eta4}. Namely, for $a$ below a certain critical value $a_1 > 0\,,$ $\eta_a$ is, in fact, the sum of two positive measures (Fig. \ref{fig:eta1}), while for $a_1 < a < a_2$, for certain $a_2$, $\eta_a$ is the difference of two positive measures but is still a positive measure everywhere in $[-a,a]$ (Fig. \ref{fig:eta2}); thus, by uniqueness, for $0<a<a_2$, $\eta_a$ agrees with $\mu_a$, the (positive) equilibrium measure of $[-a,a]$ in $Q$. Conversely, for $a>a_2$, $\eta_a$ is no longer a positive measure everywhere in $[-a,a]$; indeed, by \eqref{endbehsigned} we have that the support of its negative part consists of two symmetric subintervals $[a',a]$ and $[-a,-a']\,,\,0<a'<a\,$ (Fig. \ref{fig:eta4}.)

Otherwise, if $q<1$, \eqref{signedfixed} is always a positive measure everywhere on $[-a,a]$, and thus, $\mu_{Q,[-a,a]}\equiv \eta_a\,,$ for any $a>0\,.$ Therefore, for $q<1$ the density of the signed (indeed, positive) equilibrium measure on $[-a,a]$ always diverges to $+\infty$ at the endpoints; see Fig. \ref{fig:qless1}, where the graphs of density of $\eta_a$ for $q = .5$ and $a=10$ and $a=20$ are displayed.

\begin{figure}
\centering
\begin{minipage}{.5\textwidth}
  \centering
  \includegraphics[width=.5\linewidth]{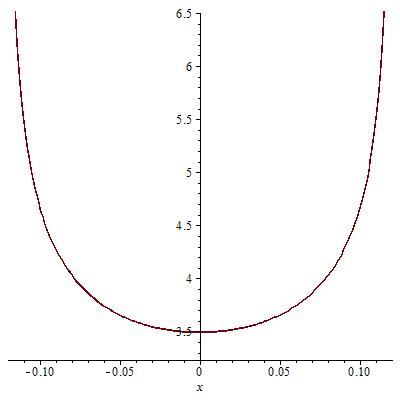}
  \caption{Density of $\eta_a$ for $q=5$ and $a = .12$.}
  \label{fig:eta1}
\end{minipage}%
\begin{minipage}{.5\textwidth}
  \centering
  \includegraphics[width=.5\linewidth]{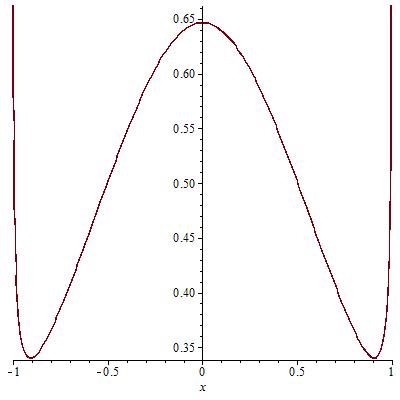}
  \caption{Density of $\eta_a$ for $q=5$ and $a = 1$}
  \label{fig:eta2}
\end{minipage}%
\end{figure}

\begin{figure}[h]
    \begin{center}
    \includegraphics[scale=0.3]{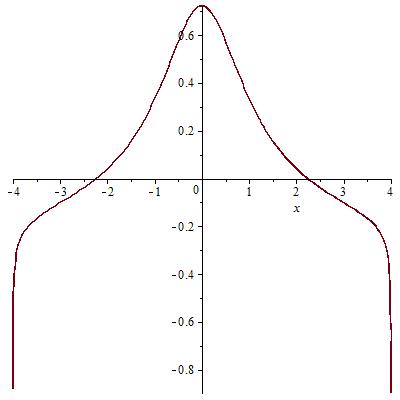}
    \end{center}
    \caption{Density of $\eta_a$ for $q=5$ and $a = 4 > a_2$. }
    \label{fig:eta4}
\end{figure}

\begin{figure}[h]
    \begin{center}
    \includegraphics[scale=0.4]{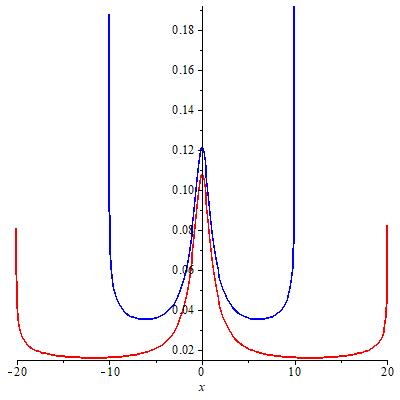}
    \end{center}
    \caption{Density of $\eta_a$ for $q=.5$ and $a = 10$ (blue) and $a = 20$ (red). }
    \label{fig:qless1}
\end{figure}

There is a very useful tool combining the two main concepts in this section, namely the signed equilibrium measure and the balayage of a measure, which may lead to determine the (positive) equilibrium measure. This is a recursive procedure known as the \emph{Iterated Balayage Algorithm} (hereafter, IBA). This method has been successfully applied in a number of problems in the case of logarithmic potentials (see \cite{KD}, \cite{DDK}, \cite{DK}, among others), and also, implicitly at least, for the case of Riesz $s$-potentials in \cite{BDS2009} (there, a continuous version of this algorithm was used).
Next, we include a short description of this method (for details in the log--case, see \cite{KD} and \cite{DragnevPhD}).

Namely, suppose that we are given a Gauss variational problem in the real axis (for Riesz $s$-potentials, $0<s<1$) in the presence of a certain external field $Q$ and it is known that the support $S_Q$ of the equilibrium measure $\mu_Q$ is compact. Suppose $\Sigma = \Sigma_0 $ is a compact subset of the real axis large enough so that $S_Q \subset \Sigma_0$, and let $\nu_0 = \eta_{Q,\Sigma_0}$ be the corresponding signed equilibrium measure, according to the definition \eqref{defsigned}. Then, if $\nu_0 = \nu_0^+ - \nu_0^-$ denotes its Jordan decomposition, then \eqref{signedsupp} yields that $\mu_Q \leq \nu_0^+$ and $$S_Q \subset \supp \nu_0^+ = \Sigma_1\,.$$ Now, we can compute the signed equilibrium measure $\nu_1$ in the external field $Q$ for the compact interval $\Sigma_1$ and we also have that $\mu_Q \leq \nu_1^+$ and $$S_Q \subset \supp \nu_1^+ = \Sigma_2 \subset \Sigma_1\,.$$ Iterating this procedure, a sequence of measures $\{\nu_k\}$ and other sequence of nested compact sets $\{\Sigma_k\}$ are generated so that $$S_Q \subset \Sigma_{k+1} = \supp \nu_k^+\,,\,k=0,1,\ldots$$
Obviously, the algorithm stops if at any time the signed measure $\nu_k$ is actually a positive one; in that case, we would have that $\mu_Q = \nu_k$.
Thus, it seems reasonable to expect that the sequence $\{\nu_k^+\}$ converges to the equilibrium measure $\mu_Q$, and it is indeed the case if certain conditions are satisfied; the hardest part is often to prove that the sequence of negative parts $\{\nu_k^-\}$ tends to zero as $k\rightarrow \infty$. This requires to control the limit set
\begin{equation}\label{sigmastar}
\Sigma^* = \cap_{k=1}^{\infty}\,\Sigma_k\,.
\end{equation}
In the proof of Theorem \ref{thm:single}, in the next section, this method will be applied.
\begin{remark}\label{rem:nameIBA}
In the log--case, each of the successive signed measures $\nu_{k+1}$ agrees with the balayage of the previous one, $\nu_k$, onto the support of its positive part $\Sigma_{k+1} = \supp \nu_k^+\,,\,k=0,1,\ldots$. Due to the mass loss in Riesz $s$--balayage we modify, the iterative scheme above by adding appropriate scaling of the regular $s$--equilibrium measure at each step.
\end{remark}

To end this section, along with the family of signed equilibrium measures $\{\eta_a\}_{a>0}\,,$ we will also consider the one--parameter family of measures $\{\sigma_a\}_{a>0}\,,$ whose densities are given by
\begin{equation}\label{sigmaa}
\sigma'_a (x) = \,\frac{q\,b^{1-s}}{\mathcal{B}\left(\frac{1}{2},\frac{1-s}{2}\right)\,\mathcal{B}\left(\frac{1+s}{2},\frac{1-s}{2}\right)}\,
\left(\frac{\mathcal{B}\left(\frac{1+s}{2},\frac{1-s}{2}\right)}{(x^2+b^2)^{1-\frac{s}{2}}} - \,\frac{I_a (a)- I_a (x)}{(a^2-x^2)^{\frac{1-s}{2}}}\right)\,,|x|<a.
\end{equation}
with $I_a (x)$ given by \eqref{integralI}. First, we have the following
\begin{lemma}\label{lem:sigma1}
For any $a>0$, it holds:

\begin{itemize}

\item[(i)] $\sigma_a$ is a positive measure for any $a>0$

\item[(ii)] $U^{\sigma_a} (x) + Q(x) = E_a = -\,\frac{q\,b^{1-s}}{\sqrt{a^2+b^2}} = const < 0\,,\;|x|\leq a\,.$

\item[(iii)] $\|\sigma_a\|\in (0,q)\,,$ as $a$ varies through $(0,+\infty)$

\item[(iv)] $\displaystyle \lim_{|x|\rightarrow a^-}\,\sigma'_a (x) = 0\,.$

\end{itemize}

\end{lemma}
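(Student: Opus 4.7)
The plan is to derive all four statements from a single algebraic decomposition of $\sigma_a$ together with one integral representation of its density. First I would verify that
\begin{equation*}
\sigma_a \,=\, q\,Bal_s(\delta_z,[-a,a]) \,-\, K_a\,\omega_{[-a,a]}, \qquad K_a \,:=\, \frac{q\,b^{1-s}\,a^s\,\mathcal{B}(\tfrac{1}{2},\tfrac{1+s}{2})}{\mathcal{B}(\tfrac{1+s}{2},\tfrac{1-s}{2})\,\sqrt{a^2+b^2}},
\end{equation*}
by subtracting the density \eqref{balint} from \eqref{sigmaa}, using the closed form \eqref{integralIa} for $I_a(a)$, and comparing with the density of $\omega_{s,[-a,a]}$ given by \eqref{sequilmeasint}--\eqref{consteq}. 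Once this is done, (ii) follows immediately: the defining properties of the balayage and the Robin measure give $U_s^{Bal_s(\delta_z,[-a,a])}(x) = |x-z|^{-s}$ and $U_s^{\omega_{[-a,a]}} \equiv W_s([-a,a])$ on $[-a,a]$ (the former holds quasi-everywhere and extends by continuity since $z \notin \R$). A standard beta-function substitution in $U_s^{\omega_{[-a,a]}}(0)$ produces $W_s([-a,a]) = \mathcal{B}(\tfrac{1-s}{2},\tfrac{1+s}{2})/(a^s\,\mathcal{B}(\tfrac{1}{2},\tfrac{1+s}{2}))$, and multiplying yields $K_a\,W_s([-a,a]) = q b^{1-s}/\sqrt{a^2+b^2}$, which combined with $Q(x) = -q|x-z|^{-s}$ gives $U_s^{\sigma_a}(x) + Q(x) = -qb^{1-s}/\sqrt{a^2+b^2} = E_a < 0$.

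For (i) and (iv) I would begin with the elementary identity
\begin{equation*}
I_a(a) - I_a(x) \,=\, (a^2 - x^2)\int_0^\infty \frac{u^{-(1+s)/2}\,du}{(u+a^2+b^2)^{1-s/2}(u + a^2 - x^2)},
\end{equation*}
obtained from \eqref{integralI} by writing $\tfrac{1}{u} - \tfrac{1}{u+a^2-x^2}$ over a common denominator, and then substitute $u = (a^2-x^2)v$ to obtain
\begin{equation*}
\frac{I_a(a) - I_a(x)}{(a^2 - x^2)^{(1-s)/2}} \,=\, \int_0^\infty \frac{v^{-(1+s)/2}\,dv}{(v+1)\bigl[(a^2-x^2)(v+1) + (x^2+b^2)\bigr]^{1-s/2}}.
\end{equation*}
Since $\mathcal{B}(\tfrac{1+s}{2},\tfrac{1-s}{2}) = \int_0^\infty v^{-(1+s)/2}(v+1)^{-1}\,dv$, the density \eqref{sigmaa} rewrites as
\begin{equation*}
\sigma_a'(x) \,=\, \widetilde{A}\int_0^\infty \frac{v^{-(1+s)/2}}{v+1}\left(\frac{1}{(x^2+b^2)^{1-s/2}} - \frac{1}{[(a^2-x^2)(v+1) + (x^2+b^2)]^{1-s/2}}\right)dv,
\end{equation*}
for a positive constant $\widetilde{A}$. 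Because $(a^2-x^2)(v+1) \geq 0$ for $|x|\leq a$, the parenthesized expression is nonnegative, strictly positive when $|x|<a$ and identically zero at $x = \pm a$, which proves (i); dominated convergence as $x \to \pm a^\mp$ then delivers (iv).

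Finally (iii) is a short consequence. From the decomposition $\|\sigma_a\| = q\,m_a - K_a$, and since $K_a > 0$ together with $0 < m_a < 1$ (strict mass loss of the Riesz $s$-balayage onto a bounded set, cf.\ the remark following \eqref{signedfixed}), we obtain $\|\sigma_a\| < q\,m_a < q$, while the strict positivity of $\sigma_a'$ on $(-a,a)$ established in the previous step yields $\|\sigma_a\| > 0$. The principal bookkeeping obstacle is the decomposition step: the many beta-function coefficients must be aligned so that the $(a^2-x^2)^{-(1-s)/2}$-singularities in $q\,Bal_s(\delta_z,[-a,a])$ and $K_a\,\omega_{[-a,a]}$ combine to produce exactly the $I_a(a) - I_a(x)$ structure of \eqref{sigmaa} — it is precisely this cancellation that drives the vanishing at the endpoints in (iv). Once the decomposition and the integral representation are in place, everything else reduces to a one-line inequality or direct substitution.
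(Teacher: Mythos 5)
Your proposal is correct and follows essentially the same route as the paper: the decomposition $\sigma_a = q\,Bal_s(\delta_z,[-a,a]) - K_a\,\omega_{[-a,a]}$ is exactly the paper's identity \eqref{sigmaa1} (your $K_a$ matches the coefficient there after inserting \eqref{integralIa} and \eqref{consteq}), and the integral representation you use for (i) and (iv) is the paper's \eqref{limit}. The only detail worth adding is that for (iii) the paper also records $\lim_{a\to 0^+}\|\sigma_a\|=0$ and $\lim_{a\to\infty}\|\sigma_a\|=q$ (needed later to locate $\widetilde a$ with $\|\sigma_{\widetilde a}\|=1$), whereas you prove only the containment $\|\sigma_a\|\in(0,q)$, which is all the stated lemma requires.
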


\begin{proof}

First, let us see that $\sigma$ is a positive measure in $(-a,a)$. To do it, take into account that \eqref{integralI} implies that
\begin{equation*}
\begin{split}
\frac{I_a (a)- I_a (x)}{(a^2-x^2)^{\frac{1-s}{2}}}\, & =\,(a^2-x^2)^{\frac{1+s}{2}}\,\int_0^{\infty}\,\frac{u^{-\,\frac{1+s}{2}}}{(u+a^2+b^2)^{1-\frac{s}{2}}}\,
\frac{du}{u+a^2-x^2} \\
\leq & \frac{1}{(a^2+b^2)^{1-\frac{s}{2}}}\,\int_0^{\infty}\,\frac{v^{-\,\frac{1+s}{2}}}{1+v}\,dv\,=\,\frac{B(\frac{1+s}{2},\frac{1-s}{2})}{(a^2+b^2)^{1-\frac{s}{2}}} \\
\leq & \,\frac{B(\frac{1+s}{2},\frac{1-s}{2})}{(x^2+b^2)^{1-\frac{s}{2}}}\,.
\end{split}
\end{equation*}
Second, it is necessary to check that the total (also called ``chemical'') potential $$U^{\sigma}(x)\,+\,Q(x) = U^{\sigma}(x)\,-\,\frac{q}{|x-z|^s} = E \equiv const$$ on $[-a,a]$. But it is immediate because \eqref{sigmaa}, \eqref{balint}, and \eqref{integralI} imply that
\begin{equation}\label{sigmaa1}
\sigma_a = q\,\left(Bal_s\,(\delta_z, [-a,a])\,-\,\frac{b^{1-s}\,I_a (a)}{\mathcal{B}\left(\frac{1}{2},\frac{1-s}{2}\right)\,\mathcal{B}\left(\frac{1+s}{2},\frac{1-s}{2}\right)\,C_s}\,\omega_{s,[-a,a]}\right)\,,
\end{equation}
where $\omega_{s,[-a,a]}$ denotes the (Robin) $s$-equilibrium measure of $[-a,a]$ (see \eqref{sequilmeasint}) and $C_s = C_{s,[-a,a]}$ is given by \eqref{consteq}. Indeed, taking into account the expression of the Riesz $s$-energy of the interval (i.e. its unweighted equilibrium energy)
\begin{equation}\label{energyint}
W_s(a) = \,\frac{\Gamma(\frac{1-s}{2})\,\Gamma(1+s)}{2^s\,\Gamma(\frac{1+s}{2})}\,a^{-s}
\end{equation}
and \eqref{integralIa}, we obtain that
\begin{equation*}\label{Fconst}
E = E_{s,q,b,a} = -\,\frac{q\,b^{1-s}}{\sqrt{a^2+b^2}}\,.
\end{equation*}
As for part (iii), that $\|\sigma_a\| \in (0,q)$ immediately arises from \eqref{sigmaa1}, in such a way that $\displaystyle \lim_{a\rightarrow 0^+} \|\sigma_a\| = 0$ and $\displaystyle \lim_{a\rightarrow \infty} \|\sigma_a\| = q\,.$

To prove (iv), note that
\begin{equation}\label{limit}
\begin{split}
\frac{I_a (a)- I_a (x)}{(a^2-x^2)^{\frac{1-s}{2}}} & =\,(a^2 - x^2)^{\frac{1+s}{2}}\,\int_0^{\infty}\,\frac{u^{-\,\frac{1+s}{2}}}{(u+a^2+b^2)^{1-\frac{s}{2}}}\,
\frac{du}{u+a^2-x^2} \\
& = \,\int_0^{\infty}\,\frac{v^{-\frac{1+s}{2}}}{(v+1)\,((a^2-x^2)v + a^2+b^2)^{1-\frac{s}{2}}}\,dv
\end{split}
\end{equation}
and, taking into account that $$ \mathcal{B}\left(\frac{1+s}{2},\frac{1-s}{2}\right) = \int_0^{\infty}\,\frac{u^{-\frac{1+s}{2}}}{u+1}\,du\,,$$ \eqref{limit} easily yields  $$\lim_{|x|\rightarrow a^-}\,\sigma'_a (x)\,=\,0\,.$$

\end{proof}

In Fig \ref{fig:fig6} - Fig \ref{fig:fig7}, the plots of the densities of $\sigma_a$ for $q = 5$ and $b = 1$, corresponding to the cases $a = 5$ (in this case, $\|\sigma_a\| = 2.616107631$) and $a = .5$ ($\|\sigma_a\| = 0.1579953845$), are displayed.

\begin{figure}
\centering
\begin{minipage}{.5\textwidth}
  \centering
    \includegraphics[scale=0.3]{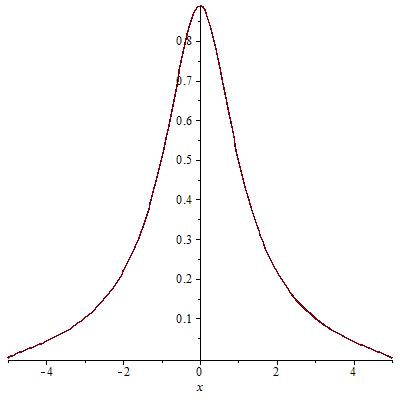}
    \caption{Density of $\sigma_a$ for $a = 5$. }
    \label{fig:fig6}
\end{minipage}%
\begin{minipage}{.5\textwidth}
  \centering
    \includegraphics[scale=0.3]{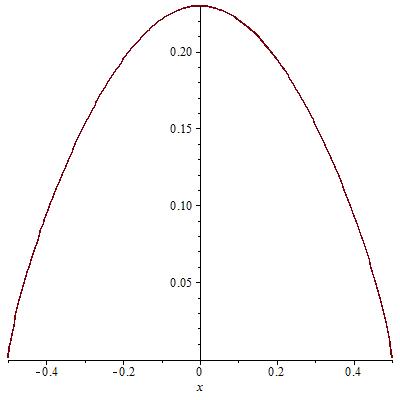}
    \caption{Density of $\sigma_a$ for $a = 0.5$. }
    \label{fig:fig7}
\end{minipage}%
\end{figure}

Finally, observe also that, comparing \eqref{sigmaa} with \eqref{signedfixed}, we have that:
\begin{equation*}\label{relationship}
\eta_a = \sigma_a + K\,\omega_{[-a,a]}\,,
\end{equation*}
for a certain constant $K = K(q,b,s,a)$. Indeed, this close relationship between the positive measure $\sigma_a$ and the signed measure $\eta_a$ will be exploited in the proof of Theorem \ref{thm:single}.

\section{Proofs of the main results}

\subsection{Proof of Theorem \ref{thm:compint}}

\textbf{(1)} As seen in Section 3, for $a$ large enough the signed equilibrium measure $\eta_{a} = \eta_{Q,[-a,a]}$ has a negative part supported on two symmetric pieces close to the endpoints of the interval, say $[-a,-b] \cup [b,a]$, with $0<b<a$ (see Fig. 4 and \eqref{endbehsigned}). Then, it is possible to apply the IBA (also described in Section 3) to find the equilibrium measure $\mu_{a} = \mu_{Q,[-a,a]}\,,$ starting from $\Sigma_0 = [-a_0,a_0] = [-a,a]$. In this way, as shown in the previous section, we can build a sequence of measures $\{\nu_k^+\}$ such that $S_Q  \subset \ldots \subset \Sigma_{k+1} = \supp \nu_k^+ \subset \ldots  \subset \Sigma_1 = \supp \nu_0^+\,,$ where $$\nu_k = \eta_{a_k} = \eta_{Q,[-a_k,a_k]}\,,$$ with $\Sigma_{k} = [-a_k,a_k] = \supp \nu_{k-1}^+\,,$ and $a_0 \geq a_1 \geq \ldots $

Now, we are concerned with finding the limit of the sequence $\{\nu_k\}$. But \eqref{endbehsigned} shows that the limit of the recursive algorithm above occurs when the critical value of $a$, for which
\begin{equation}\label{criticalma}
\frac{b^{1-s}}{\mathcal{B}\left(\frac{1+s}{2},\frac{1-s}{2}\right)\,\sqrt{a^2+b^2}}\,=\,\frac{q\,m_a - 1}{\mathcal{B}\left(\frac{1+s}{2},\frac{1}{2}\right)\,a^s}\,,
\end{equation}
is attained, for which the density of $\eta_a'$ vanishes at the endpoints, and no negative part of the signed equilibrium measure is supported in two symmetric pieces beside the endpoints. 
But comparing with the expression of the density of $\sigma_a$ \eqref{sigmaa}, it is easy to check that the critical value of $a$ for which identity \eqref{criticalma} is attained, agrees with the one for which $\displaystyle \|\sigma_a\| = 1\,$ (see \eqref{sigmaa1}). Therefore, if we call $\widetilde{a}$ that critical value of $a$, we have that the IBA process converges to a unit measure, being positive everywhere on $[-\widetilde{a},\widetilde{a}]$ and, hence, $$\mu_{Q,a} \equiv \eta_{\widetilde{a}} \equiv \sigma_{\widetilde{a}}\,.$$
Thus, by the Frostman conditions we have that
\begin{equation}\label{FrostIBA}
V^{\sigma_{\widetilde{a}}}(x) + Q(x) \begin{cases} = C\,,\;& |x|\leq \widetilde{a}\,,\\ \geq C\,,\;& |x|\leq a\,, \end{cases}
\end{equation}
for some constant $C$. But since the limit measure of IBA is the same for any $a\geq \widetilde{a}$, \eqref{FrostIBA} also holds for any $a\in (\widetilde{a},+\infty)\,,$ and therefore, the Frostman inequality holds for any $x\in \R$ and, in turn, $\mu_Q \equiv \sigma_{\widetilde{a}}$. This renders the proof.


\textbf{(2)} For the proof of the second part we need the following Lemma.
\begin{lemma}\label{lem:Fbehavior}
For the Gauss variational problem \eqref{minenergy} in the external field \eqref{fixed}, we have:

\begin{itemize}

\item[(a)] If there exists an equilibrium measure $\mu_Q$, satisfying Frostman conditions \eqref{equilibrium}, then it holds $F_Q \leq 0$.

\item[(b)] If $S_Q$ is unbounded, then necessarily $q=1$ (and, hence, $S_Q = \R$).

\end{itemize}

\end{lemma}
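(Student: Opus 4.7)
For part (a), my plan is to test the lower Frostman inequality $U_s^{\mu_Q}(x)+Q(x)\geq F_Q$ -- which, since $Q$ is continuous on $\R$, in fact holds \emph{everywhere} on $\R$ (as remarked just after \eqref{equilibrium}) -- along a sequence $|x_n|\to\infty$. Since $Q(x)\to 0$ automatically as $|x|\to\infty$, it suffices to exhibit such a sequence with $U_s^{\mu_Q}(x_n)\to 0$. I would do this by averaging $U_s^{\mu_Q}$ over annuli $R<|x|<2R\subset\R$: Fubini together with the uniform estimate $\int_{R<|x|<2R}|x-y|^{-s}\,dx\leq C R^{1-s}$ (valid for $0<s<1$ and any $y\in\R$) yields an average value of order $O(R^{-s})\to 0$, from which one extracts $x_n$ in successive annuli with $U_s^{\mu_Q}(x_n)\to 0$. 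Passing to the limit in the Frostman bound then gives $F_Q\leq 0$.

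For part (b), I would first upgrade (a) to $F_Q=0$. If $S_Q$ is unbounded, pick $x_n\in S_Q$ with $|x_n|\to\infty$; the Frostman \emph{equality} on $S_Q$ gives $U_s^{\mu_Q}(x_n)=F_Q-Q(x_n)\to F_Q$, and $U_s^{\mu_Q}\geq 0$ forces $F_Q\geq 0$, which combined with (a) yields $F_Q=0$. Using Lemma \ref{lem:realaxis}, the lower Frostman bound rewrites as
\[
U_s^{\mu_Q}(x)\geq q\,U_s^{\widehat{\delta}_z}(x)\quad\text{everywhere on }\R,
\]
where $\widehat{\delta}_z=Bal_s(\delta_z,\R)$. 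The plan is now to show the signed measure $\nu:=\mu_Q-q\widehat{\delta}_z$ vanishes identically. Integrating the previous inequality against $\widehat{\delta}_z$ (supported on $\R$ with finite $s$-energy) and applying Fubini together with the balayage identity $U_s^{\widehat{\delta}_z}=|\cdot-z|^{-s}$ on $\R$ gives $U_s^{\mu_Q}(z)\geq q\,I_s(\widehat{\delta}_z)$. Integrating the Frostman equality $U_s^{\mu_Q}+Q=0$ against $\mu_Q$ yields the dual identity $I_s(\mu_Q)=q\,U_s^{\mu_Q}(z)$, and expanding the energy of $\nu$ gives
\begin{equation*}
I_s(\nu)=I_s(\mu_Q)-2q\,U_s^{\mu_Q}(z)+q^2 I_s(\widehat{\delta}_z)=-q\bigl(U_s^{\mu_Q}(z)-q\,I_s(\widehat{\delta}_z)\bigr)\leq 0.
\end{equation*}
Strict positive definiteness of the Riesz $s$-energy on signed measures of finite energy (invoked in the Introduction) then forces $\nu=0$, i.e.\ $\mu_Q=q\widehat{\delta}_z$. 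Comparing total masses -- both $\|\mu_Q\|$ and $\|\widehat{\delta}_z\|$ equal $1$ by Lemma \ref{lem:realaxis} -- yields $q=1$, and $S_Q=\supp\widehat{\delta}_z=\R$ since $\widehat{\delta}_z$ has strictly positive density on all of $\R$.

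The main obstacle is the escape-sequence step for (a): establishing that the Riesz potential of a probability measure on the line cannot stay bounded away from zero near infinity. The Fubini-based averaging estimate is the clean workaround. Everything else reduces to structural manipulations with balayage and positivity of the $s$-energy form.
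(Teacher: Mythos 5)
Your proof is correct, but it takes a genuinely different route from the paper's in both parts. For (a), the paper simply lets $|x|\to\infty$ in the Frostman inequality, asserting $U_s^{\mu_Q}(x)\to 0$ (via truncation to $\mu_Q|_{[-n,n]}$ and a monotone-convergence argument when $S_Q$ is unbounded); your annulus-averaging via Fubini instead produces a single sequence $x_n$ with $U_s^{\mu_Q}(x_n)\to 0$, which is all the inequality requires, and it sidesteps the interchange-of-limits issue lurking in the truncation argument (in general only $\liminf_{|x|\to\infty}U_s^{\mu}(x)=0$ is guaranteed for a finite measure on $\R$, and that is exactly what your averaging delivers). For (b), the paper multiplies the Frostman \emph{equality} by $|x|^s$ and passes to the limit along $S_Q$, reading off $F_Q=0$ and $1-q=0$ from the asymptotics $|x|^sU_s^{\mu_Q}(x)\to\|\mu_Q\|=1$ and $|x|^sQ(x)\to -q$; you instead first deduce $F_Q=0$, rewrite the Frostman inequality as $U_s^{\mu_Q}\geq U_s^{q\widehat{\delta}_z}$ on $\R$ via Lemma \ref{lem:realaxis}, and use the two integrated identities together with strict positive definiteness of the Riesz kernel to force $\mu_Q=q\,Bal_s(\delta_z,\R)$, so that $q=1$ follows from the mass preservation \eqref{noloss}. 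Your version is longer but more robust: the paper's limit $|x|^sU_s^{\mu_Q}(x)\to 1$ along an unbounded support needs justification beyond Fatou (which only gives $\liminf\geq 1$), whereas your energy argument uses only positivity and finiteness of the energies involved, and as a bonus it identifies $\mu_Q$ explicitly as the balayage measure, recovering the weakly admissible case described in Remark \ref{weakly}. Two small points worth making explicit if you write this up: the mutual energy $\int U_s^{\widehat{\delta}_z}\,d\mu_Q=U_s^{\mu_Q}(z)\leq q^{-1}\sup|Q|<\infty$ and $I_s(\widehat{\delta}_z)=U_s^{\widehat{\delta}_z}(z)<\infty$, so the expansion of $I_s(\nu)$ is legitimate.
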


\begin{proof}
The proof of (a) is immediate in case $S_Q$ is a compact subset of the real axis. Indeed, it is enough to take limits as $|x|\rightarrow \infty$ in the inequality of \eqref{equilibrium}, and take into account that
$$\lim_{|x|\rightarrow \infty}\,V^{\mu_Q}(x) = \lim_{|x|\rightarrow \infty}\,\int\,\frac{d\mu_Q(y)}{|x-y|^s}\, = 0 = \,\lim_{|x|\rightarrow \infty}\,Q(x)$$ to conclude that $F_Q \leq 0$.

In case $S_Q$ is unbounded, it is enough to follow a \emph{monotone convergence} argument, using the sequence of truncated measures $\displaystyle \mu_n = \mu_Q |_{[-n,n]}\,,\,n\in \N\,,$ which converges to $\mu_Q\,,$ as $n\rightarrow \infty\,.$ It is clear that $\mu_n \leq \mu_{n+1}\,,\,n\in \N$ and $\lim_{|x|\rightarrow \infty}\,V^{\mu_n}(x) = 0$ and, thus, $\lim_{|x|\rightarrow \infty}\,V^{\mu_Q}(x) = 0\,.$

For part (b), it suffices to take into account that now the equality in \eqref{equilibrium} holds for $|x|$ large enough. Therefore, multiplying this equality by $|x|^s$, taking limits as $|x|\rightarrow \infty$ and applying again the monotone convergence argument, it yields that necessarily $q=1$ and, hence, $F=0$, with the whole real axis being the support of the equilibrium measure.

\end{proof}

Now, Lemma \ref{lem:Fbehavior} (b) shows that for $q<1$, in case of existence of $\mu_Q$ it must be compactly supported. But in that case, for some $a$ large enough, $\mu_Q \equiv \mu_{a} = \mu_{Q,[-a,a]}\,,$ i.e. the weighted equilibrium measure for a compact interval $[-a,a]$. But, as seen in Section 3, for $q<1$ we have that $\eta_a \equiv \mu_a\,,\,$ for any $a>0$ (see Fig. 5) and, then, we would conclude that $\mu_Q \equiv \eta_a\,,\,$ for some $a>0$. Then, \eqref{signedfixed} shows that $$V^{\eta_a}(x) + Q(x) = (1- q\,m_a)\,W_s(a)\,,$$
with $W_s(a)>0$ being the $s$--energy of the interval $[-a,a]$ given in \eqref{energyint}. Finally, since $q<1$ and $m_a<1$, we would get a positive value for the equilibrium constant $F_Q$, and it contradicts Lemma \ref{lem:Fbehavior} (a).

\subsection{Proof of Theorem \ref{thm:single}}

We aim to determine the endpoints $\pm \widetilde{a}$ of the support $S_Q = [-\widetilde{a},\widetilde{a}]$. To this end, the Mhaskar--Saff $\mathcal{F}_s$-functional \eqref{MSfunct} will be used.
In our case, this functional takes the form:
\begin{equation}\label{MSa}
\mathcal{F}_s(K)\,=\,\mathcal{F}_s(a)\,=\,W_s(a)\,-\,q\,\int Q(x)\,d\omega_{[-a,a]}(x)\,=\,W_s(a)\,-\,q\,U_s^{\omega_{[-a,a]}}(b i)\,,
\end{equation}
with $d\omega_{[-a,a]}(x)$ given in \eqref{sequilmeasint} and $W_s(a)$ is given by \eqref{energyint}. Thus, after some calculations, we get
\begin{equation}\label{potential}
V_s^{\omega_{[-a,a]}}(b i)\, =\,\frac{B(\frac{1+s}{2},\frac{1}{2})}{2^2\,B(\frac{1+s}{2},\frac{1+s}{2})}\,(a^2+b^2)^{-s/2}\,
_2F_1\left(\frac{s}{2},\frac{1+s}{2};1+\frac{s}{2};\frac{a^2}{a^2+b^2} \right)\,,
\end{equation}
and therefore, from \eqref{energyint} and \eqref{potential}, \eqref{MSa} may be written in the form:
\begin{equation}\label{MSfinal}
\begin{split}
\mathcal{F}_s(a)\, & =\,\frac{\Gamma(1+s)}{2^s\,\Gamma\left(\frac{1+s}{2}\right)}\,a^{-s}\,\left(\Gamma\left(\frac{1-s}{2}\right)\,
-\,q\,\frac{\sqrt{\pi}}{\Gamma(1+\frac{s}{2})}\left(\frac{a}{\sqrt{a^2+b^2}}\right)^s\,_2F_1\left(\frac{s}{2},\frac{1+s}{2};1+\frac{s}{2};\frac{a^2}{a^2+b^2} \right)\right)\\
& =\,\frac{\Gamma(1+s)}{2^s\,\Gamma\left(\frac{1+s}{2}\right)}\,a^{-s}\,g(c,s)\,,
\end{split}
\end{equation}
with $\displaystyle c = \,\frac{a^2}{a^2+b^2}\,.$

Now, taking $b >0$ and $q>0$ fixed, it is easy to check (see \cite[Ch. 15]{Abramowitz}) that $g(c,s)$ in \eqref{MSfinal} is a decreasing function of $a$; and using \cite[(15.1.20)]{Abramowitz}, we also have that $$ _2F_1\left(\frac{s}{2},\frac{1+s}{2};1+\frac{s}{2};1 \right)\,=\,\frac{\Gamma\left(\frac{1+s}{2}\right)\,\Gamma\left(\frac{1-s}{2}\right)}{\sqrt{\pi}}$$ and, hence,
$$\lim_{a\rightarrow +\infty}\,g(c,s)\,=\,\Gamma\left(\frac{1-s}{2}\right)\,-\,q\,\frac{\sqrt{\pi}}{\Gamma(1+\frac{s}{2})}\,\lim_{c\rightarrow 1^-}\,_2F_1\left(\frac{s}{2},\frac{1+s}{2};1+\frac{s}{2};c\right)\,=
\,(1-q)\,\Gamma\left(\frac{1-s}{2}\right)\,.$$ Since, on the other hand, it is clear that
$$\lim_{a\rightarrow +\infty}\,\mathcal{F}_s(a) = 0\;\;\text{and}\;\;\lim_{a\rightarrow 0^+}\,\mathcal{F}_s(a) = +\infty\,,$$
it implies that for $q\leq 1$, $\mathcal{F}_s(a)$ is a strictly positive and decreasing function of $a$ and, thus, it has not a minimum. Otherwise, when $q>1$, $\mathcal{F}_s(a)<0$ for $a>a_0$, with $a_0 = a_0(s,q,b)$ and, since $\lim_{a\rightarrow +\infty}\,\mathcal{F}_s(a) = 0$, then $\mathcal{F}_s(a)$ must be an increasing function of $a$ for $a>\widetilde{a}$, with $\widetilde{a}=\widetilde{a}(s,q,b )\,>\,a_0\,.$ Therefore, the absolute minimum of the function $\mathcal{F}_s(a)$ is attained at this point $\widetilde{a}\in (0,+\infty)$ (see Figure \ref{fig:MS}).

\begin{figure}[h]
    \begin{center}
    \includegraphics[scale=0.4]{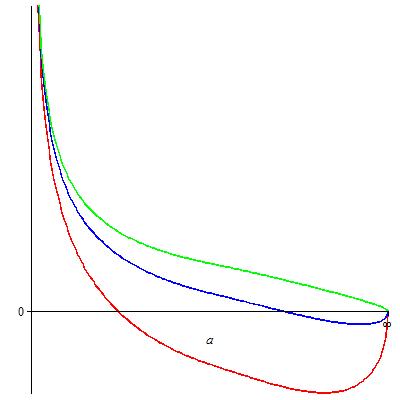}
    \end{center}
    \caption{Graph of the function $\mathcal{F}_s(a)$ for $s = 1/2, b = 1$, and $q = 5$ (red), $q=2$ (blue), $q=0.75$ (green). Observe that for $q=.75$, $\mathcal{F}_s$ is a positive and monotonically decreasing function in $(0,\infty)$ which never meets the real axis. }
    \label{fig:MS}
\end{figure}


\subsection{Proof of Theorem \ref{thm:endbehavior}}

Indeed, from \eqref{densityeqmeas} and \eqref{limit} we have that
\begin{equation*}
\begin{split}
& \lim_{|x|\rightarrow \widetilde{a}^-}\,(\widetilde{a}^2 - x^2)^{-\,\frac{1+s}{2}}\,\mu_Q' (x)  = \\
& \lim_{|x|\rightarrow \widetilde{a}^-}\,(\widetilde{a}^2 - x^2)^{-\,\frac{1+s}{2}}\,
\left\{\frac{\mathcal{B}\left(\frac{1+s}{2},\frac{1-s}{2}\right)}{(x^2+b^2)^{1-\frac{s}{2}}} - \,(\widetilde{a}^2 - x^2)^{\frac{1+s}{2}}\,\int_0^{\infty}\,\frac{u^{-\,\frac{1+s}{2}}}{(u+\widetilde{a}^2+b^2)^{1-\frac{s}{2}}}\,
\frac{du}{u+\widetilde{a}^2-x^2} \right\} = \\
& \lim_{|x|\rightarrow \widetilde{a}^-}\,(\widetilde{a}^2 - x^2)^{-\,\frac{1+s}{2}}\,\int_0^{\infty}\,\frac{v^{-\frac{1+s}{2}}}{v+1}\,\left\{\frac{1}{(x^2+b^2)^{1-\frac{s}{2}}} - \,\frac{1}{((\widetilde{a}^2-x^2)v + \widetilde{a}^2+b^2)^{1-\frac{s}{2}}}\right\}\,dv.
\end{split}
\end{equation*}
Now, making the change of variable $\displaystyle w = \frac{\widetilde{a}^2-x^2}{x^2+b^2}\,(v+1)\,,$
we have that
\begin{equation*}
\begin{split}
& \lim_{|x|\rightarrow \widetilde{a}^-}\,(\widetilde{a}^2 - x^2)^{-\,\frac{1+s}{2}}\,\mu_Q' (x)  = \\
& \frac{1}{(x^2+b^2)^{1-\frac{s}{2}}}\,\lim_{|x|\rightarrow \widetilde{a}^-}\,(\widetilde{a}^2 - x^2)^{-\,\frac{1+s}{2}}\,\int_{\frac{\widetilde{a}^2-x^2}{x^2+b^2}}^{\infty}\,
\frac{\left(\frac{x^2+b^2}{\widetilde{a}^2-x^2}\,w\,-\,1\right)^{-\frac{1+s}{2}}\,\left(1-(1+w)^{\frac{s}{2}-1}\right)}{w}\,dw
\end{split}
\end{equation*}
Therefore, after some straightforward manipulations, we get
$$\lim_{|x|\rightarrow \widetilde{a}^-}\,(\widetilde{a}^2 - x^2)^{-\,\frac{1+s}{2}}\,\mu_Q' (x)  = \,\frac{1}{(\widetilde{a}^2+b^2)^{\frac{3}{2}}}\,\int_0^{\infty}\,\frac{1-(1+w)^{\frac{s}{2}-1}}{w^{\frac{3+s}{2}}}\,dw\,.$$
Since $\displaystyle 1-(1+w)^{\frac{s}{2}-1} = \left(1-\,\frac{s}{2}\right)\,w + O(w^2)\,$ for $w\rightarrow 0^+\,,$ we have that the last integral is convergent and does not vanish. This renders the proof of \eqref{endbehavior}.


\begin{thebibliography}{10}

\bibitem{Abramowitz}
M. Abramowitz, I. A. Stegun, \newblock {\em Handbook of mathematical functions with formulas, graphs, and mathematical tables}.
\newblock Dover, 1970.


\bibitem{Benko2003}
D. Benko, \newblock Approximation by weighted polynomials.
\newblock {\em J. Approx. Th.} 120 (2003), 153–-182


\bibitem{BD2012}
D. Benko, P. D. Dragnev, \newblock Balayage Ping-Pong: A convexity of Equilibrium Measures.
\newblock {\em Constr. Approx.} 36 (2012), 191--214.

\bibitem{BDT2012}
D. Benko, P. D. Dragnev, V. Totik, \newblock Convexity of harmonic densities.
\newblock {\em Rev. Mat. Iberoam.} 28 (2012), no. 4, 947–-960.

\bibitem{BLW}
T. Bloom, N. Levenberg, F. Wielonsky,
\newblock Logarithmic Potential Theory and large deviation.
\newblock Comp. Meth. Funct. Th. 15(4), 555--594 (2015).


\bibitem{BDS2009}
J. S. Brauchart, P. D. Dragnev, E. B. Saff, \newblock Riesz extremal measures on the sphere for axis-supported external fields.
\newblock {\em J. Math. Anal. Appl.} 356 (2009) 769–-792.

\bibitem{BDS2014}
J. S. Brauchart, P. D. Dragnev, E. B. Saff, \newblock Riesz external fields problems on the hypersphere and optimal point separation.
\newblock {\em Potential Anal.} 2014 (41): 647--678.

\bibitem{chafai}
D. Chafai, N. Gozlan, P. A. Zitt,
\newblock First--order global asymptotics for confined particles with singular pair repulsion.
\newblock {\em Ann. Appl. Probab.} 24 (2014), no. 6, 2371--2413.



\bibitem{DK}
S. B. Damelin and A. B. Kuijlaars,
\newblock The support of the extremal measure for monomial external fields on [-1,1].
\newblock {\em Trans. Amer. Math. Soc.}, 351 (1999), 4561–-4584.

\bibitem{DDK}
S. B. Damelin, P. Dragnev and A. B. Kuijlaars,
\newblock The support of the equilibrium measure for a class of external fields on a finite interval.
\newblock {\em Pacific J. Math.}, 199 (2001), no. 2, 303–-321.

\bibitem{DragnevPhD}
P. D. Dragnev,
\newblock Constrained energy problems for logarithmic potentials.
\newblock {\em Ph.D. Thesis}, University of South Florida, Tampa, FL, 1997.

\bibitem{DrSa2007}
P. D. Dragnev, E. B. Saff,
\newblock Riesz Spherical Potentials with External Fields and Minimal Energy Points Separation.
\newblock {\em Potential Anal.} 26 (2007), 139-–162.

\bibitem{Hardy} A. Hardy,
\newblock A note on large deviations for 2D Coulomb gas with weakly confining potential,
\newblock {\em Electron. Commun. Probab.} 17 (2012), no. 19, 1-–12.

\bibitem{Hardy-Kuijlaars}
A. Hardy, A. B. J. Kuijlaars,
\newblock Weakly admissible vector equilibrium problems.
\newblock {\em J. Approx. Th.} 164(6), 854--868 (2012).

\bibitem{KD}
A. B. J. Kuijlaars, P. Dragnev,
\newblock Equilibrium problems associated with fast decreasing polynomials.
\newblock {\em Proc. Am. Math. Soc.} 127, 1065--1074 (1999).

\bibitem{KuML 00}
A.~B.~J. Kuijlaars and K.~T.-R. McLaughlin,
\newblock Generic behavior of the density of states in random matrix theory and
  equilibrium problems in the presence of real analytic external fields.
\newblock {\em Comm. Pure Appl. Math.}, 53(6):736--785, 2000.

\bibitem{Landkof}
N. S. Landkof,
\newblock {\em Foundations of modern potential theory}, Grundlehren der mathematischen
Wissenschaften 180.
\newblock Springer-Verlag, New York-Heidelberg, 1972.

\bibitem{leble}
T. Lebl\'{e}, S. Serfaty,
\newblock Large deviation principle for empirical fields of log and Riesz gases.
\newblock {\em Invent. Math.} 210 (2017), no. 3, 645–-757.

\bibitem{abey}
A. L\'{o}pez Garc\'{i}a,
\newblock Greedy energy points with external fields.
\newblock {\em Contemp. Math.} 507 (2010), 189--207.



\bibitem{MMRS}
A.~Mart\'{\i}nez-Finkelshtein, V. Maymeskul, E. A. Rakhmanov and E. B. Saff,
\newblock Asymptotics for Minimal Discrete Riesz Energy on Curves in $\R^d$.
\newblock {\em Canad. J. Math.} Vol. 56 (3), 2004, 529--552.

\bibitem{MOR2015}
A.~Mart{\'{\i}}nez-Finkelshtein, R. Orive and E. A. Rakhmanov, \newblock Phase transitions and equilibrium measures in random matrix models. \newblock {\em Comm. Math. Phys.} 333 (2015), 1109–-1173.


\bibitem{OrSL2015}
R.~Orive and J.~S\'{a}nchez Lara,
\newblock Equilibrium measures in the presence of certain rational external fields.
\newblock {\em J. Math. Anal. Appl.} 431 (2015), 1224--1252.

\bibitem{OrSL2016}
R.~Orive and J.~S\'{a}nchez Lara,
\newblock On external fields created by fixed charges.
\newblock {\em J. Math. Anal. Appl.} 464 (2018), 119--151.

\bibitem{OSW}
R.~Orive, J.~S\'{a}nchez Lara and F.~Wielonsky,
\newblock Equilibrium problems in weakly admissible external fields created by pointwise charges.
\newblock {\em J. Approx. Th.} 244 (2019), 71--100.

\bibitem{Saff:97}
E.~B. Saff and V.~Totik,
\newblock {\em Logarithmic Potentials with External Fields}, volume 316 of
  Grundlehren der Mathematischen Wissenschaften.
\newblock Springer-Verlag, Berlin, 1997.

\bibitem{Simeonov} P. Simeonov,
\newblock A weighted energy problem for a class of admissible weights,
\newblock Houston J. Math. 31 (2005), 1245--1260.

\bibitem{Zorii1} N. V. Zorii,
\newblock Equilibrium potentials with external fields.
\newblock {\em Ukrainian Math. J.} 55 (2003), 1423–-1444.

\bibitem{Zorii2} N.V. Zorii,
\newblock Equilibrium problems for potentials with external fields.
\newblock {\em Ukrainian Math. J.} 55 (2003), 1588–-1618.





\end{thebibliography}
\end{document}